\tikzstyle{arrow} = [thick,->,>=stealth]
\tikzstyle{process} = [rectangle, minimum width=4cm, minimum height=2cm, text centered, text width=2.5cm, draw=green, fill=green!10]
\newcommand{\IFF}{\text{if and only if}}
\newcommand{\N}{\mathbb{N} }
\newcommand{\C}{\mathbb{C} }
\theoremstyle{plain}
\newtheorem{theorem}{Theorem}[section]
\newtheorem{corollary}[theorem]{Corollary}
\newtheorem{proposition}[theorem]{Proposition}
\theoremstyle{definition}
\newtheorem{definition}[theorem]{Definition}
\newtheorem{remark}[theorem]{Remark}
\newtheorem{example}[theorem]{Example}
\begin{document}
\title{ Idempotent \& Nilpotent Operators and Matrices in Bicomplex Space}
\author{Anjali}
\email{anjalisharma773@gmail.com}
\address{Department of Applied Mathematics, Gautam Buddha University, Greater Noida, Uttar Pradesh 201312, India}

\author{Akhil Prakash}
\email{akhil.sharma140@gmail.com}
\address{Department of Mathematics, Aligarh Muslim University, Aligarh, Uttar Pradesh 202002, India}

\author{Amita}
\email{amitasharma234@gmail.com}
\address{Department of Mathematics, Indira Gandhi National Tribal University, Amarkantak, Madhya Pradesh 484886, India}

\author{Neeraj Kumar Tomar}
\email{neer8393@gmail.com}
\address{Department of Applied Mathematics, Gautam Buddha University, Greater Noida, Uttar Pradesh 201312, India}

\keywords{Bicomplex Numbers, Conjugation, Vector Space and Linear transformation.}
\subjclass[IMS]{Primary 15A04, 15A30; Secondary 30G35}
\date{\today}
	
\begin{abstract}
This paper explores idempotent and nilpotent operators in bicomplex spaces, focusing on their properties and behavior. We define idempotent and nilpotent matrices in this framework and derive related results. Several theorems are presented to establish conditions for the existence and behavior of bicomplex idempotent and nilpotent operators and bicomplex idempotent matrices.
\end{abstract}
\maketitle

\section{Introduction} \label{section11}

The theory of bicomplex numbers is a central focus of contemporary mathematical research, with significant progress in recent years. Numerous authors (see \cite{alpay2014basics, luna2015bicomplex, rochon2004, srivastava2003, srivastava2008}) have advanced the field, exploring diverse perspectives to elucidate their properties and establish a consistent framework for the multivariate theory of complex numbers. Recently, researchers studying matrices and linear operators (see \cite{Amita2024rank, Amita2018, anjali2023matrix, anjali2025new, prabhat2}) over various algebraic systems have made extensive contributions to mathematics. Bicomplex numbers, introduced by Segre, extend the concept of complex numbers and form a commutative ring with zero divisors. Their properties find applications in functional analysis, quantum mechanics, and signal processing.

\section{Preliminaries and Notations} \label{section12}
This section provides an introduction to bicomplex numbers and explores their key properties. It highlights several essential findings related to bicomplex numbers.\\
\noindent {\bf Bicomplex numbers:}
Bicomplex numbers are an extension of complex numbers, defined as:
\[
\xi = {u}_{1}  + {i}_{1} {u}_{2}+ {i}_{2} {u}_{3} + {i}_{1} {i}_{2} {u}_{4},
\]
where $u_1, u_2,u_3$ and $u_4$ are real numbers with 
${i}_{1} {i}_{2} \ = \ {i}_{2} {i}_{1},\; \; {i}_{1}^2 = {i}_{2}^2  =  -1.$

The collection of all bicomplex numbers is represented by  $\C_{2}$ and is referred to as the bicomplex space. 
For simplicity, $\mathbb{C}_{1}$ stands for the set of complex numbers, and  $\mathbb{C}_{0}$ indicates the set of real numbers. The bicomplex space $\C_{2}$ can be characterized in two distinct ways:
\begin{eqnarray*}
\mathbb{C}_{2} &\coloneqq & \{{u}_{1}  + {i}_{1} {u}_{2}  + {i}_{2} {u}_{3}  + {i}_{1} {i}_{2} {u}_{4}\;:\; {u}_{1} , {u}_{2} , {u}_{3} , {u}_{4} \in\mathbb{C}_{0}\}, and \\
 \mathbb{C}_{2} &\coloneqq & \{{z}_{1} + {i}_{2} {z}_{2}\;:\;  {z}_{1}, {z}_{2} \in \mathbb{C}_{1}\}.
\end{eqnarray*}

The set $\mathbb{C}_{2}$ contains zero-divisors, which makes it an algebra over $\mathbb{C}_{1}$ rather than a field. Within $\mathbb{C}_{2}$, there are exactly four idempotent elements: $0,1,e_1,e_2$, where $e_1$ and $e_2$ are two nontrivial idempotent elements, specified as follows:
 \[ e_{1} \coloneqq \frac{(1 + i_{1} i_{2})}{2}\quad  \mbox{and} \quad e_{2} \coloneqq \frac{(1 - i_{1} i_{2})}{2}.\]
 These elements stand out due to their orthogonality $(e_{1}e_{2}  = e_{2}e_{1} = 0)$ \; and the fact that they add up to 1\;$(e_{1} + e_{2} =1)$.
 \begin{eqnarray}\label{eq0} 
   \mbox{Also,} \quad  e_1^n= e_1 \quad \mbox{and}\quad e_2^n= e_2; n \in \N.
 \end{eqnarray}

\noindent{\bf Idempotent Representation and Equality Condition of Bicomplex Numbers:}
Every bicomplex number $\xi \in \C_2$ has a unique idempotent representation as a complex combination of $e_1$ and $e_2$ as follows:
 \[
\xi =  (z_{1} -  i_{1} z_{2}) e_{1} + (z_{1}  +  i_{1} z_{2}) e_{2},
\]                    
The complex numbers $(z_{1} -  i_{1} z_{2})$ and $(z_{1} +  i_{1} z_{2})$ are called the idempotent component of $\xi$ and are denoted by $\xi^-$ and $\xi^+$, respectively (cf. Srivastava [11]). Thus, the bicomplex number can be written as:
$\xi = \xi^- e_{1} + \xi^+ e_{2}$, where $\xi^-  = z_{1} - i_{1} z_{2}$ and $\xi^+ = z_{1}  + i_{1} z_{2}$. 

Furthermore, for two bicomplex numbers $\xi,\eta \in \C_2$,
\[ \xi =\eta \Leftrightarrow  \xi^-= \eta^-, \xi^+= \eta^+\]
That is, the bicomplex numbers are equal if and only if their corresponding idempotent components coincide.

 \begin{definition} (\cite{anjali2025new}, [Definition 1.4]). \label{matrix def}:  
 	A bicomplex matrix of order $m\times n$ is written as \;$ A=[\xi_{i j}]_{m \times n}$, $\xi_{i j} \in \C_2$   with each element $\xi_{i j} \in \C_2$. The collection of all such bicomplex matrices is denoted $\C_2^{m \times n}$,defined as:
\begin{eqnarray}
\C_2^{m \times n}  \eqqcolon  \Big\{[\xi_{i j}] : \; \xi_{i j} \in \C_2 ;\; i = 1,2,\ldots,m,\; j = 1,2,\ldots,n \Big\}.
\end{eqnarray} 

With usual mtrix addition and scalar multiplication, the set $\C_2^{m \times n}$ forms a vector space over the field $\C_1$. The dimension of $\C_2^{m \times n}$ over $\C_1$ is immediately given by
\begin{eqnarray}
\dim(\C_2^{m \times n})(\C_1) = 2mn.
\end{eqnarray}
Furthermore, each bicomplex matrices $A$ uniquely decomposes as
$A=\left[\xi_{i j}\right]_{m \times n}\in \C_2^{m \times n}$ can be decomposed uniquely as 
\begin{eqnarray}
 A = e_1 \ A^- \ + \ e_2 \  A^+,
\end{eqnarray}
where  $A^- =\left[\xi^-_{i j}\right]_{m \times n},\; A^+ =\left[\xi^+_{i j}\right]_{m \times n}$ are complex matrices. 
\begin{remark} \label{equality matrix}
Analogous to the concept of equality of two bicomplex numbers, two bicomplex matrices $A = e_1 A^-  +  e_2 A^+,B = e_1 B^-  +  e_2 B^+ \in\C_2^{m \times n}$  are equal if and only if their idempotent component matrices are equal. That is,
\begin{eqnarray} \label{equal}
    A=B \quad \mbox{if and only if} \quad A^- = B^- \quad \mbox{and} \quad A^+ = B^+,
    \end{eqnarray}
  and the product , sum of two bicomplex matrices and bicomplex scalar product are decomposed as follows:
  \begin{eqnarray}
      A\cdot B &=& e_1(A^- \cdot B^-) + e_2(A^+ \cdot B^+)\\
      (A+B) &=& e_1(A^- + B^-) + e_2(A^+ + B^+)\\
      \xi \cdot A &=& e_1 (\xi A^-) +  e_2 (\xi A^+) ; \ \forall \ \xi \in \C_2
  \end{eqnarray}
\end{remark}

\begin{remark} (\cite{anjali2023matrix},[Remark 3.1]).\label{rem1}
To streamline notation, denote the set of all \index{$\C_1$-linear maps from $\C_1^{n}$ to $\C_1^{m}$}$\C_1$-linear maps from $\C_1^{n}$ to $\C_1^{m}$ by $L_1^{nm}$,  and set  of all \index{$\C_1$-linear maps from $\C_2^{n}$ to $\C_2^{m}$} $\C_1$-linear maps from $\C_2^{n}$ to $\C_2^{m}$ by $L_2^{nm}$. Both are vector spaces over $\C_1$, with dimensions:
\begin{eqnarray}\label{4mn}
\dim(L_1^{nm}) = m n \;\;\mbox{and}\;\; \dim(L_2^{nm}) = \dim C_2^{n}\cdot \dim C_2^{m} = 2n\cdot 2m = 4mn. 
\end{eqnarray}
Since $\C_1$ is a field, $L_1^{nm}  \cong \C_1^{m \times n}$. However $\C_2$ is a not field, $L_2^{nm}  \not\cong  \C_2^{m \times n}$. Instead, $\C_2^{m \times n}$ is a proper subspace of $L_2^{nm}$, leading to the next definition.
\end{remark}

\begin{definition}\label{defid}
( \cite{anjali2023matrix},[Definitions 3.2, 4.1]). For any given $T_{1}, T_{2} \in L_1^{nm}$,  we can define a map $T \colon \C_2^{n}  \to \C_2^{m} $  by the following rule:
\[
T(\xi_{1}, \xi_{2},\ldots,\xi_{n}) \eqqcolon e_{1}\cdot {T}_{1} (\xi_{1}^-, \xi_{2}^-,\ldots,\xi_{n}^-)
+ e_{2}\cdot {T}_{2} (\xi_{1}^+, \xi_{2}^+,\ldots,\xi_{n}^+).
\]
\end{definition}
\noindent Clearly \;$T$\; is a $\C_1$-linear map.\;$T$\; can also be represented by $e_{1} T_{1} + e_{2} T_{2}$. Thus the set of all such linear maps is the idempotent product $L_1^{nm} \times_{e} L_1^{nm}$, i.e., we have
\begin{eqnarray}\label{idemproduct}
L_1^{nm} \times_{e} L_1^{nm} &\eqqcolon & \left \{\;e_{1} T_{1} + e_{2} T_{2} \in L_2^{nm}:\; \;T_{1}, T_{2} \in L_1^{nm}\right \}.  
\end{eqnarray}

\noindent For convenience, the set of all such type of $T= e_{1} T_{1} + e_{2} T_{2} \colon \C_2^{n}  \to \C_2^{n}$ linear operators is denoted by  $L_{1}^{n} \times_{e} L_{1}^{n}$. The idempotent product $L_1^{nm} \times_{e} L_1^{nm}$ is a subspace of $L_2^{nm}$ over the field $\C_1$. This indicates directly that $L_{1}^{nm} \times_{e} L_{1}^{nm}$ has dimension $2mn$. That is 
\begin{eqnarray}
    \dim (L_1^{nm} \times_{e} L_1^{nm}(\C_1))= 2mn.
\end{eqnarray}

Since $\C_2^{m \times n}$ and $L_{1}^{nm} \times_{e} L_{1}^{nm}$  have same dimensions over $\C_1$, they are isomorphic. Hence, the matrix expression for
$T= e_1 T_1 + e_2 T_2$ is defined using the ordered bases $\mathcal{B}_{1}$  for $\C_1^{n}$, and $\mathcal B_{2}$  for $\C_1^{m}$ as follows:

\begin{eqnarray}
[T]^{\mathcal{B}_{1}}_{\mathcal{B}_{2}}\eqqcolon e_1 [T_1]^{\mathcal{B}_{1}}_{\mathcal{B}_{2}} + e_{2} [T_2]^{\mathcal{B}_{1}}_{\mathcal{B}_{2}}.
\end{eqnarray}

Here, $[T_1]^{\mathcal{B}_{1}}_{\mathcal{B}_{2}}$ and $[T_2]^{\mathcal{B}_{1}}_{\mathcal{B}_{2}}$ are matrices of $T_1$ and $T_2$ for bases $\mathcal{B}_{1}$ and $\mathcal{B}_{2}$. If $\C_1^{n} = \C_1^{m}$, the matrix representation of $T= e_{1} T_{1} + e_{2} T_{2}$
with respect to basis $\mathcal{B}$ for $\C_1^{n}$ is simplified to  $[T]_{\mathcal{B}}$ from $[T]^{\mathcal{B}}_{\mathcal{B}}$. Thus, it follows:
\begin{eqnarray}\label{matrixrepre}
[T]_{\mathcal{B}} = e_{1} [T_{1}]_{\mathcal{B}} + e_{2} [T_{2}]_{\mathcal{B}}.
\end{eqnarray}

\end{definition}
\begin{proposition} (\cite{anjali2023matrix},[Proposition 3.3]). \label{Proposition1}
Let $T, S \in L_{1}^{nm} \times_{e} L_{1}^{nm}$ be any elements such that $T = e_{1} T_{1} + e_{2} T_{2}$ and $S = e_{1} S_{1} + e_{2} S_{2}$. Then, we have 
\begin{enumerate}
\item $T + S = e_{1} (T_{1} + S_{1}) + e_{2} (T_{2} + S_{2})$.
\item $\alpha T = e_{1} (\alpha T_{1}) + e_{2} (\alpha T_{2}); \quad \forall  \ \alpha \in \C_1$.
\end{enumerate}
\end{proposition}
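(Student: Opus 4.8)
The plan is to verify both identities by evaluating the operators on an arbitrary vector $v = (\xi_1,\ldots,\xi_n) \in \C_2^{n}$ and invoking Definition \ref{defid} together with the orthogonality relations $e_1 e_2 = e_2 e_1 = 0$, $e_1^2 = e_1$, $e_2^2 = e_2$ and $e_1 + e_2 = 1$. Writing $\xi_j = \xi_j^- e_1 + \xi_j^+ e_2$ and abbreviating $v^- = (\xi_1^-,\ldots,\xi_n^-)$, $v^+ = (\xi_1^+,\ldots,\xi_n^+)$, the defining rule gives $T(v) = e_1 T_1(v^-) + e_2 T_2(v^+)$ and, likewise, $S(v) = e_1 S_1(v^-) + e_2 S_2(v^+)$.

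For part (1), I would compute $(T+S)(v) = T(v) + S(v)$ using the pointwise sum in $L_2^{nm}$, substitute the two idempotent expressions, and regroup the $e_1$- and $e_2$-terms using the $\C_1$-linearity of $T_1, S_1, T_2, S_2$ to obtain $e_1\big(T_1(v^-) + S_1(v^-)\big) + e_2\big(T_2(v^+) + S_2(v^+)\big) = e_1 (T_1+S_1)(v^-) + e_2 (T_2+S_2)(v^+)$. By Definition \ref{defid} this is precisely the action of $e_1(T_1+S_1) + e_2(T_2+S_2)$ on $v$; since $v$ was arbitrary, the two operators coincide. Part (2) is handled the same way: $(\alpha T)(v) = \alpha\, T(v) = \alpha e_1 T_1(v^-) + \alpha e_2 T_2(v^+)$, and since $\alpha \in \C_1$ commutes with $e_1$ and $e_2$ (as $\C_2$ is a commutative ring) and $\alpha T_i$ is again $\C_1$-linear, this equals $e_1(\alpha T_1)(v^-) + e_2(\alpha T_2)(v^+)$, i.e.\ the action of $e_1(\alpha T_1) + e_2(\alpha T_2)$.

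An alternative and slightly cleaner bookkeeping would be to first record that the idempotent representation of an element of $L_1^{nm}\times_e L_1^{nm}$ is unique: if $e_1 U_1 + e_2 U_2 = e_1 V_1 + e_2 V_2$ as maps $\C_2^n\to\C_2^m$, then evaluating on vectors with entries in $\C_1$ and comparing idempotent components (the equality condition for bicomplex numbers) forces $U_1 = V_1$ and $U_2 = V_2$. Granting this, parts (1) and (2) reduce to checking that $e_1(T_1+S_1)+e_2(T_2+S_2)$ and $e_1(\alpha T_1)+e_2(\alpha T_2)$ have the claimed idempotent components, which is immediate from the computation above.

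There is no genuine obstacle here; the statement is a routine compatibility check between the pointwise vector-space structure that $L_1^{nm}\times_e L_1^{nm}$ inherits as a subspace of $L_2^{nm}$ and the ``coordinatewise'' structure coming from the idempotent decomposition. The only points demanding a line of care are the commutativity $\alpha e_i = e_i \alpha$ used in (2) and the observation that $\alpha T_i$ and $T_i + S_i$ remain $\C_1$-linear, so that the right-hand sides genuinely lie in $L_1^{nm}\times_e L_1^{nm}$ and Definition \ref{defid} applies to them.
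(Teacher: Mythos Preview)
Your proposal is correct and is the natural direct verification from Definition~\ref{defid}. The paper does not supply its own proof of this proposition; it merely cites the result from \cite{anjali2023matrix}, so there is nothing to compare against, but your pointwise computation is exactly what one would expect and would constitute a complete proof.
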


\begin{theorem} (\cite{anjali2025new},[Theorem 2.7]) \label{th1}
A linear operator $T = e_1T_1 + e_2T_2 \in L_1^{n} \times_e L_1^{n}$ is singular $\IFF$  either $T_1$ is singular or $T_2$ is singular.
\end{theorem}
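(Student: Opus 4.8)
The plan is to reduce the statement to the corresponding classical fact for complex linear operators, using the idempotent decomposition machinery already set up in the excerpt. Recall that a linear operator $T \in L_1^{n} \times_e L_1^{n}$ is, by definition, singular (non-invertible) precisely when it fails to be bijective as a $\C_1$-linear map $\C_2^n \to \C_2^n$; equivalently, when there is no $S \in L_1^{n}\times_e L_1^{n}$ with $ST = TS = \mathrm{Id}$. So the first step is to write $T = e_1 T_1 + e_2 T_2$ with $T_1, T_2 \in L_1^{n}$ and observe, via Definition \ref{defid}, that for any $(\xi_1,\dots,\xi_n) \in \C_2^n$ we have $T(\xi_1,\dots,\xi_n) = e_1 T_1(\xi_1^-,\dots,\xi_n^-) + e_2 T_2(\xi_1^+,\dots,\xi_n^+)$, and that by the equality condition for bicomplex matrices/vectors (Remark \ref{equality matrix}, equation \eqref{equal}) this output is zero if and only if $T_1(\xi_1^-,\dots,\xi_n^-) = 0$ in $\C_1^n$ and $T_2(\xi_1^+,\dots,\xi_n^+) = 0$ in $\C_1^n$.

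Next I would argue the contrapositive in the convenient direction: $T$ is \emph{nonsingular} iff both $T_1$ and $T_2$ are nonsingular. For the ``if'' part, if $T_1$ and $T_2$ are invertible in $L_1^{n}$, set $S \coloneqq e_1 T_1^{-1} + e_2 T_2^{-1} \in L_1^{n}\times_e L_1^{n}$; using the multiplicativity of the idempotent decomposition (the operator analogue of $A\cdot B = e_1(A^-B^-) + e_2(A^+B^+)$ in Remark \ref{equality matrix}, together with $e_1^2=e_1$, $e_2^2=e_2$, $e_1e_2=0$ from \eqref{eq0}), one checks $ST = e_1(T_1^{-1}T_1) + e_2(T_2^{-1}T_2) = e_1\,\mathrm{Id} + e_2\,\mathrm{Id} = \mathrm{Id}$, and similarly $TS = \mathrm{Id}$, so $T$ is nonsingular. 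For the ``only if'' part, suppose, say, $T_1$ is singular; then there is a nonzero $v \in \C_1^n$ with $T_1 v = 0$, and the bicomplex vector $\xi \coloneqq e_1 v \neq 0$ satisfies $T\xi = e_1 T_1 v + e_2 T_2 0 = 0$, so $T$ has nontrivial kernel and is singular. Symmetrically if $T_2$ is singular, using $\xi = e_2 v$. Negating both equivalences then gives exactly the claimed statement: $T$ is singular iff $T_1$ is singular or $T_2$ is singular.

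The only genuine subtlety — and the step I would be most careful about — is making sure the notion of ``singular'' for $T \in L_1^n \times_e L_1^n$ is interpreted correctly: because $\C_2$ is not a field, one must decide whether ``singular'' means ``not injective'', ``not surjective'', or ``not invertible'', and check these coincide here. Since $T_1, T_2$ are endomorphisms of the finite-dimensional $\C_1$-space $\C_1^n$, each of them is injective iff surjective iff invertible; pushing this through the decomposition, $T$ is injective iff both $T_i$ are (by the kernel computation above) iff both $T_i$ are invertible iff $T$ is invertible (by the explicit inverse $S$ above), so all three notions agree, and the argument is insensitive to which one the definition intends. A secondary point to record cleanly is that $S = e_1 T_1^{-1} + e_2 T_2^{-1}$ indeed lies in $L_1^n \times_e L_1^n$ — immediate from \eqref{idemproduct} — so the inverse is of the required form and the proof stays inside the class of operators under consideration.
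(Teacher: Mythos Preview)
Your argument is correct and self-contained: the kernel computation via $\xi = e_1 v$ (or $e_2 v$) shows that singularity of either component forces singularity of $T$, and the explicit two-sided inverse $S = e_1 T_1^{-1} + e_2 T_2^{-1}$ handles the converse. The discussion of why injective, surjective, and invertible coincide here is a nice touch.

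However, there is nothing to compare against in this paper: Theorem~\ref{th1} is merely \emph{quoted} from \cite{anjali2025new} (their Theorem~2.7) and is not proved here. The present paper uses it as a preliminary result and supplies no argument of its own. So your proposal is a valid proof of a statement that the paper simply imports.
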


Previously, \cite{anjali2023matrix} introduced the
"Idempotent method" for matrix representation a linear map of the form  $T= e_1 T_1 + e_2 T_2: \C_2^{n} \rightarrow \C_2^{m} $. This method provides a systematic approach to establishing a one-to-one correspondence between bicomplex matrices $A=[\xi_{i j}]_{n \times n}$  and the linear operator's $T= e_1 T_1 + e_2 T_2 $ on finite dimensional vector space $\C_2^{n}$. This method helps analyze specific classes of matrices and operators in bicomplex spaces, offering a valuable approach for further study. For a detailed discussion on the Idempotent Method, see \cite{anjali2023matrix}.
With this foundation in place, we examine idempotent and nilpotent operators and idempotent and nilpotent matrices in bicomplex spaces,which offer unique insights into the structure of bicomplex linear algebra
\begin{theorem} (\cite{anjali2023matrix},[Theorem 3.4]). \label{properties} 
Let $T = e_{1} T_{1} + e_{2} T_{2},\; S = e_{1} S_{1} + e_{2} S_{2}$ be any two elements of $L_{1}^{nm} \times_{e} L_{1}^{nm}$. Then, we have
\begin{enumerate}
\item $T = 0$ \IFF \; $T_{1} = 0, T_{2} = 0$
\item $T = S$ \IFF  \; $T_{1} = S_{1}, T_{2} = S_{2}$
%\item $S\cdot T = e_{1} (S_{1} \cdot T_{1}) + e_{2} (S_{2} \cdot T_{2})$
\item $S\circ T = e_{1} (S_{1} \circ T_{1}) + e_{2} (S_{2} \circ T_{2})$, wherever composition defined.
 \end{enumerate}
 \end{theorem}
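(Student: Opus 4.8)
The plan is to reduce all three parts to a single fact recalled in Section~\ref{section12}: the idempotent representation is unique, so for complex vectors $v,w\in\C_1^{m}$ one has $e_1v+e_2w=0$ in $\C_2^{m}$ if and only if $v=0$ and $w=0$ (the equality is coordinatewise, and for each coordinate the vanishing of a bicomplex number forces the vanishing of both of its idempotent components). This is the only structural input needed; the rest is a direct unwinding of Definition~\ref{defid}.

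For part~(1), I would argue that $T=e_1T_1+e_2T_2=0$ forces $T_1=T_2=0$ by evaluating $T$ on suitably chosen inputs. Given an arbitrary $x=(\eta_1,\dots,\eta_n)\in\C_1^{n}$, the bicomplex vector $e_1x=(e_1\eta_1,\dots,e_1\eta_n)$ has idempotent components $(e_1x)^{-}=x$ and $(e_1x)^{+}=0$, so by Definition~\ref{defid} one gets $T(e_1x)=e_1T_1(x)+e_2T_2(0)=e_1T_1(x)$. Since $T=0$, the fact above gives $T_1(x)=0$ for every $x$, i.e.\ $T_1=0$; evaluating on $e_2x$ instead yields $T_2=0$. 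The converse is immediate, since if $T_1=T_2=0$ then $T(\xi)=e_1\cdot 0+e_2\cdot 0=0$ for all $\xi\in\C_2^{n}$ by Definition~\ref{defid}. Part~(2) then follows by applying part~(1) to the difference $T-S$: by Proposition~\ref{Proposition1}, $T-S=e_1(T_1-S_1)+e_2(T_2-S_2)$ again lies in $L_1^{nm}\times_e L_1^{nm}$, and $T=S$ is equivalent to $T-S=0$, which by part~(1) happens exactly when $T_1-S_1=0$ and $T_2-S_2=0$, i.e.\ when $T_1=S_1$ and $T_2=S_2$.

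For part~(3), I would compute $(S\circ T)(\xi)$ directly on an arbitrary $\xi\in\C_2^{n}$. Writing $\xi^{-}=(\xi_1^{-},\dots,\xi_n^{-})$ and $\xi^{+}=(\xi_1^{+},\dots,\xi_n^{+})$, Definition~\ref{defid} gives $T(\xi)=e_1T_1(\xi^{-})+e_2T_2(\xi^{+})$; since $T_1(\xi^{-})$ and $T_2(\xi^{+})$ are complex vectors, this expression is already the idempotent decomposition of $T(\xi)$, so $(T(\xi))^{-}=T_1(\xi^{-})$ and $(T(\xi))^{+}=T_2(\xi^{+})$. Applying Definition~\ref{defid} to $S$ then yields $(S\circ T)(\xi)=e_1S_1\bigl(T_1(\xi^{-})\bigr)+e_2S_2\bigl(T_2(\xi^{+})\bigr)=e_1(S_1\circ T_1)(\xi^{-})+e_2(S_2\circ T_2)(\xi^{+})$, which is precisely the value of $e_1(S_1\circ T_1)+e_2(S_2\circ T_2)$ at $\xi$. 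As $\xi$ is arbitrary, the two operators agree, and the same computation is valid whenever the domains and codomains match, so "wherever composition is defined" is the only hypothesis used.

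There is no serious obstacle here; the only step requiring a little care is in part~(3), namely recognising that $e_1T_1(\xi^{-})+e_2T_2(\xi^{+})$ is itself in idempotent form, so that its $\pm$-components may be read off before $S$ is applied. Everything else is a routine consequence of Definition~\ref{defid}, Proposition~\ref{Proposition1}, and the uniqueness of idempotent components.
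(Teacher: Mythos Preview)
Your argument is correct. The uniqueness of the idempotent decomposition of bicomplex vectors is exactly the right tool, and your use of the special inputs $e_1x$ and $e_2x$ in part~(1) is a clean way to isolate $T_1$ and $T_2$. The reduction of part~(2) to part~(1) via Proposition~\ref{Proposition1} is standard and works, and the computation in part~(3) is valid once you observe---as you do---that $e_1T_1(\xi^{-})+e_2T_2(\xi^{+})$ is already in idempotent form because $T_1(\xi^{-}),T_2(\xi^{+})\in\C_1^{m}$.

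As for comparison: the present paper does not actually supply a proof of this theorem. It is quoted from \cite{anjali2023matrix} (Theorem~3.4 there) and used as a black box in the arguments that follow, so there is nothing in this paper to compare your proof against. Your write-up would serve perfectly well as a self-contained justification, and indeed the later proofs in the paper (e.g.\ Theorems~\ref{t^k} and~\ref{Nillpotent}) rely on exactly the decomposition principle you have spelled out.
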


Anjali \cite{anjali2023matrix}, stated Theorem \ref{properties} and we build upon this by extending the concept to the case where $T^n=0 \; \ \forall \quad n \ \in \ \N$; accordingly, we propose the following theorems.
\begin{theorem} \label{t^k}
   Let $T = e_{1} T_{1} + e_{2} T_{2}$ be a elements of $L_{1}^{nm} \times_{e} L_{1}^{nm}$. Then,  
   \begin{eqnarray*}
   T^n &=&e_1 \underbrace{(T_1 \circ T_1 \circ T_1 \ldots T_1)}_{n \text{ times}}  + e_2 \underbrace{(T_2 \circ T_2 \circ T_2 \ldots T_2)}_{n \text{ times}}\\
   \quad \mbox{Or} \quad     T^n &=& e_1 T_{1}^n + e_2 T_{2}^n ; \ \forall \  \ n \in \N
   \end{eqnarray*}
\end{theorem}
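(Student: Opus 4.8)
The plan is to prove the identity $T^n = e_1 T_1^n + e_2 T_2^n$ by induction on $n$, using Theorem~\ref{properties}(3) as the key ingredient together with the orthogonality relations $e_1 e_2 = e_2 e_1 = 0$ and $e_1^2 = e_1$, $e_2^2 = e_2$ from equation~\eqref{eq0}.

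\begin{proof}
We proceed by induction on $n$. For $n = 1$ the statement is simply $T = e_1 T_1 + e_2 T_2$, which holds by hypothesis. Assume that for some $k \in \N$ we have
\begin{eqnarray*}
T^k = e_1 T_1^k + e_2 T_2^k.
\end{eqnarray*}
Then, writing $T^{k+1} = T \circ T^k$ and applying Theorem~\ref{properties}(3) to the two elements $T = e_1 T_1 + e_2 T_2$ and $T^k = e_1 T_1^k + e_2 T_2^k$ of $L_1^{n} \times_e L_1^{n}$, we obtain
\begin{eqnarray*}
T^{k+1} = T \circ T^k = e_1 \bigl(T_1 \circ T_1^k\bigr) + e_2 \bigl(T_2 \circ T_2^k\bigr) = e_1 T_1^{k+1} + e_2 T_2^{k+1}.
\end{eqnarray*}
This completes the induction, and hence $T^n = e_1 T_1^n + e_2 T_2^n$ for all $n \in \N$. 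The equivalent unbracketed form follows from the associativity of composition.
\end{proof}

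The argument is essentially routine; the only subtlety worth flagging is that one must verify $T^k$ again lies in $L_1^{n} \times_e L_1^{n}$ so that Theorem~\ref{properties}(3) genuinely applies — but this is immediate, since the induction hypothesis exhibits $T^k$ in the required idempotent form $e_1(\,\cdot\,) + e_2(\,\cdot\,)$ with both components $\C_1$-linear operators on $\C_1^{n}$. The main conceptual point, rather than obstacle, is simply that composition in $L_1^{n} \times_e L_1^{n}$ respects the idempotent decomposition because of the orthogonality of $e_1$ and $e_2$; once Theorem~\ref{properties}(3) is in hand, no further work is needed.
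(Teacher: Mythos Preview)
Your proof is correct and follows essentially the same approach as the paper: induction on $n$, with the inductive step handled by Theorem~\ref{properties}(3). The only cosmetic difference is that you write $T^{k+1} = T \circ T^k$ while the paper writes $T^{k+1} = T^k \circ T$, and you add the (helpful) observation that $T^k$ remains in $L_1^{n} \times_e L_1^{n}$ so that Theorem~\ref{properties}(3) applies.
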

\begin{proof}
   To prove that  $T^n = e_1 T_{1}^n + e_2 T_{2}^n$, for all $n \in \N$, using the principle of mathematical induction.
   \\
    \noindent {\textbf{Case 1.}} For $n=1$, we have
    \begin{eqnarray*}
        T^1= e_1 T_1^1 +e_2 T_2^1.
    \end{eqnarray*}
    Clearly, the statement holds.\\
    Assume that the property holds for $n=k$, that is
   \begin{eqnarray*}
   T^k = e_1 T_1^k + e_2 T_2^k.
\end{eqnarray*}
We need to show that it holds for $n=k+1$, that is 
\begin{eqnarray*}
    T^{k+1} = e_1 T_1^{k+1}  + e_2 T_2^{k+2}.
\end{eqnarray*}
Since 
\begin{center}
$T^{k+1} = T^k \circ T$.    
\end{center}
We substitute $T^k$ with its assumed form:
\begin{eqnarray*}
T^{k+1} &=&[e_1 T_1^k + e_2 T_2^k] \circ [e_1 T_1 +e_2 T_2]\\
&=&e_1(T_1^k\circ T_1)+e_2(T_2^k\circ T_2)\quad\{\mbox{by Theorem \ref{properties}}\}\\
&=&[e_1 \underbrace{(T_1 \circ T_1 \circ T_1 \ldots T_1)}_{k+1 \text{ times}}  + e_2 \underbrace{(T_2 \circ T_2 \circ T_2 \ldots T_2)}_{k+1 \text{ times}}]\\
&=&e_1T_1^{k+1}+e_2T_1^{k+1}.
\end{eqnarray*}
Using the principle of mathematical induction, the result holds for every natural number $n$, i.e.
\begin{eqnarray*}
    T^n = e_1 T_{1}^n + e_2 T_{2}^n
\end{eqnarray*}
This proof holds for any linear operator $T \in L_{1}^{n} \times_{e} L_{1}^{n}$. \\
Thus the theorem is proved.
\end{proof}

\begin{theorem} \label{Nillpotent}
    Let $T = e_{1} T_{1} + e_{2} T_{2},\; S = e_{1} S_{1} + e_{2} S_{2}$ be any two elements of $L_{1}^{nm} \times_{e} L_{1}^{nm}$. Then, we have
\begin{enumerate}
\item $T^k = 0$ \IFF \; $T_{1}^k = 0, T_{2}^k = 0$
\item $T^k = S^k$ \IFF  \; $T_{1}^k = S_{1}^k, T_{2}^k = S_{2}^k$
\end{enumerate}
\end{theorem}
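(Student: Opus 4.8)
The plan is to deduce both statements directly from Theorem \ref{t^k} together with parts (1) and (2) of Theorem \ref{properties}, since the only new ingredient beyond those results is the formula for a power of $T$. Note that $T^k$ and $S^k$ only make sense as repeated compositions, so I read the statement in the endomorphism setting $m=n$ (that is, $T,S \in L_1^{n} \times_e L_1^{n}$), with $k \in \N$ a fixed natural number; the rectangular phrasing is just inherited notation.

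First I would use Theorem \ref{t^k} to put the powers into idempotent form:
\[
T^k = e_1 T_1^k + e_2 T_2^k, \qquad S^k = e_1 S_1^k + e_2 S_2^k .
\]
Since $T_1^k, T_2^k, S_1^k, S_2^k$ are again $\C_1$-linear operators on $\C_1^{n}$, the operators $T^k$ and $S^k$ themselves belong to $L_1^{n} \times_e L_1^{n}$, so Theorem \ref{properties} may be applied to them verbatim.

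For part (1), I apply Theorem \ref{properties}(1) to the operator $T^k = e_1 T_1^k + e_2 T_2^k$: this yields $T^k = 0$ if and only if $T_1^k = 0$ and $T_2^k = 0$, which is exactly the assertion. For part (2), I apply Theorem \ref{properties}(2) to the pair $T^k = e_1 T_1^k + e_2 T_2^k$ and $S^k = e_1 S_1^k + e_2 S_2^k$, obtaining $T^k = S^k$ if and only if $T_1^k = S_1^k$ and $T_2^k = S_2^k$. One can also observe that (1) is simply the case $S = 0$ of (2), so only (2) strictly needs to be argued.

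I do not expect any genuine obstacle here: the proof is a one-line corollary once the powers are written in idempotent coordinates. The only points demanding a little care are (i) confirming that Theorem \ref{t^k} is legitimately invoked, i.e. that we are in the square case $m = n$ so that $T^k$ is defined, and that the induction underlying Theorem \ref{t^k} indeed covers the given $k \in \N$; and (ii) noting that the equivalences in Theorem \ref{properties} are stated precisely for elements of $L_1^{n} \times_e L_1^{n}$, a membership we have just checked for $T^k$ and $S^k$. After these remarks the equivalences follow immediately.
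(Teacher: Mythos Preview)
Your proposal is correct and follows essentially the same route as the paper: write $T^k$ and $S^k$ in idempotent form via Theorem~\ref{t^k} and then invoke parts (1) and (2) of Theorem~\ref{properties}. Your extra remarks that the argument only makes sense for $m=n$ and that (1) is the case $S=0$ of (2) are valid refinements, but the core argument is identical to the paper's.
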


\begin{proof}
  (1)  We need to prove that for any element $T=e_1 T_1 + e_2 T_2 \in L_{1}^{nm} \times_{e} L_{1}^{nm}$, $T^k = 0$ \IFF \; $T_{1}^k = 0, T_{2}^k = 0$\\
    Suppose,
    \begin{eqnarray*}
        &&T^k = 0 \\
        &\Leftrightarrow& e_1 T_1^k + e_2 T_2^k =0 \quad \quad\{\mbox{by Theorem \ref{t^k}}\}\\
         &\Leftrightarrow& T_1^k  = 0 \quad \&  \quad  T_2^k =0 \quad \quad \{\mbox{ as $T^k$ is L.T. \& by part (1) of Theorem \ref{properties}}\}.
    \end{eqnarray*}
    (2) We need to prove that for any two elements $T = e_{1} T_{1} + e_{2} T_{2},\; S = e_{1} S_{1} + e_{2} S_{2}\in L_{1}^{nm} \times_{e} L_{1}^{nm}$, the equality $T^k=S^k \Longleftrightarrow  T_{1}^k = S_{1}^k, T_{2}^k = S_{2}^k$ for some $k \in \N$.\\
Suppose, 
    \begin{eqnarray*}
       && T^k = S^k, \quad\mbox{for some}\ k \in \N\\
        &\Leftrightarrow&  e_1 T_{1}^k +e_2 T_{2}^k = e_1 S_{1}^k + e_2 S_{2}^k,\quad\mbox{for some}\ k \in \N \quad \quad\{\mbox{by Theorem \ref{t^k}}\}\\
     &\Leftrightarrow& \ T_{1}^k = S_{1}^k \ \mbox{and} \ T_{2}^k = S_{2}^k, \ \mbox{for some}\ k \in \N \ \{\mbox{ as $T^k , S^k$ are L.T. \& by part (2) of Theorem \ref{properties}}\}.
    \end{eqnarray*}
    Thus the theorem is proved.
\end{proof}

\section{Bicomplex nilpotent operator and  nilpotent matrices} \label{section13}
In this section, we define bicomplex nilpotent operators and explore related results. For convenience, we introduce the terms $\C_2$-nilpotent operators and $\C_2$-nilpotent matrices to specifically refer to nilpotent operators and matrices in bicomplex spaces, respectively.

\begin{definition}{\textbf{$\C_2$-nilpotent operator:}} 
A linear operator  $T \in L_1^{n} \times_{e} L_1^{n}$  is said to be a $\C_2 $-nilpotent operator if $T^n=0$ for some positive integer $n$. The smallest such $n$ is called the index of $T$.
\end{definition}

\begin{definition} {\textbf{$\C_2 $-nilpotent matrix:}} \label{nilpotent matrix}
A matrix $A= e_1 A^-  + e_2 A^+  \in \C_2^{m \times n}$  is said to be a $\C_2 $ - nilpotent matrix if there exists a positive integer $n$ such that $A^n=0$. The smallest such $n$ is called the index of matrix $A$. 
\end{definition}

\begin{theorem}\label{t7}
A linear operator $T=e_1 T_1 + e_2 T_2\in L_1^{n} \times_{e} L_1^{n}$  is a $\C_2 $-nilpotent operator if and only if $T_1$ and $T_2$ are nilpotent operators.
\end{theorem}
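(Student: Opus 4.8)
The plan is to reduce everything to the idempotent-component statements already established, namely Theorem \ref{t^k} (the power formula $T^n = e_1 T_1^n + e_2 T_2^n$) and part (1) of Theorem \ref{Nillpotent} (the equivalence $T^k = 0 \iff T_1^k = 0,\ T_2^k = 0$). Once those are in hand the proof is a short two-way argument, with the only point requiring a moment's care being that the index of $T_1$ and the index of $T_2$ need not agree, so one must pass to a common power.

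First I would prove the forward implication. Suppose $T$ is a $\C_2$-nilpotent operator. By definition there is a positive integer $n$ with $T^n = 0$. By Theorem \ref{t^k} this means $e_1 T_1^n + e_2 T_2^n = 0$, and then part (1) of Theorem \ref{Nillpotent} (equivalently, part (1) of Theorem \ref{properties} applied to the linear operator $T^n$) gives $T_1^n = 0$ and $T_2^n = 0$. Hence both $T_1$ and $T_2$ are nilpotent operators.

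Next I would prove the converse. Suppose $T_1$ and $T_2$ are nilpotent, say $T_1^{\,p} = 0$ and $T_2^{\,q} = 0$ for positive integers $p, q$. Put $n = \max\{p, q\}$. Since a power of a nilpotent operator that vanishes forces all higher powers to vanish (e.g. $T_1^{\,n} = T_1^{\,n-p}\circ T_1^{\,p} = 0$, and similarly for $T_2$), we get $T_1^{\,n} = 0$ and $T_2^{\,n} = 0$. By Theorem \ref{t^k}, $T^n = e_1 T_1^{\,n} + e_2 T_2^{\,n} = e_1\cdot 0 + e_2 \cdot 0 = 0$, so $T$ is a $\C_2$-nilpotent operator. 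Combining the two directions completes the proof.

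The step I would flag as the only genuine content beyond bookkeeping is this passage to the common exponent $n = \max\{p,q\}$ in the converse; everything else is a direct invocation of Theorems \ref{t^k} and \ref{Nillpotent}. It is also worth remarking in passing that the argument simultaneously identifies the index of $T$ as $\max$ of the indices of $T_1$ and $T_2$, which could be recorded as a corollary if desired.
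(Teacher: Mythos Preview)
Your proof is correct and follows essentially the same route as the paper: the forward direction via Theorem~\ref{Nillpotent}(1), and the converse by passing to the common exponent $\max\{p,q\}$ and invoking Theorem~\ref{t^k}. One small caveat on your closing aside: the argument as given only shows the index of $T$ is \emph{at most} $\max\{p,q\}$; establishing equality requires the additional observation that $T^{\max\{p,q\}-1}\neq 0$, which the paper carries out separately in Theorem~\ref{t9}.
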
 
\begin{proof}
   Suppose $T$ is a nilpotent operator. Then, there exists a natural number $k$ such that
   \begin{eqnarray*}
       && T^k=0 \ \mbox{or} \ (e_1 T_1 + e_2 T_2)^k = 0\\
         &\Rightarrow&  T_1^k = 0\quad \mbox{and}\quad T_2^k = 0 \quad  \{\mbox{by Theorem} \ \ref{Nillpotent}\}\\
        &\Rightarrow&  T_1 \quad \mbox{and} \quad T_2 \quad  \mbox{will be nilpotent operators}. \quad \{\mbox{by Definition \ \ref{nilpotent matrix}}\}
   \end{eqnarray*}
Conversely:
Let $T_1, T_2 \in L_1^{nm}$ be two nilpotent operators. Then there exists natural numbers $k_1,k_2$ such that
\begin{eqnarray*}
    T_1^{k_1} = 0\quad \mbox{and}\quad T_2^{k_2}=0. 
\end{eqnarray*}
This gives that 
\begin{eqnarray} \label{eq 14}
T_1^l=0 \quad\mbox{and}\quad T_2^l=0 \quad \forall \ l\in \N; l\ge k_1,k_2.    
\end{eqnarray}

From Theorem \ref{t^k} and let $l=max(k_1,k_2)$, then
\begin{eqnarray*}
 (T)^l &=& (e_1 T_1+ e_2 T_2)^l\\
&=&e_1 (T_1)^l + e_2 (T_2)^l  \\ 
&=& e_1 0 + e_2 0 \quad\{\mbox{as} \  l \ge k_1,k_2\ \mbox{and\ by\ Equation} \ \ref{eq 14} \}\\
&=& 0.
\end{eqnarray*}
Thus, we have a natural number $l$ such that $T^l=0$. Hence, $T$ will be a nilpotent operator, as required. Thus, the proof of the theorem is complete.
\end{proof}

  \begin{theorem}\label{t'7}
    Let $T= e_1 T_1+ e_2 T_2  \in L_1^{n} \times_{e} L_1^{n}$ be a $\C_2$-nilpotent operator and 
let $\mathcal{B}_{1}$ be the ordered basis for $\C_1^{n}$ such that $[T_1]_{\mathcal{B}_{1}}=A^-$, and  $[T_2]_{\mathcal{B}_{1}}=A^+ \ \IFF$  $A=e_1 A^- + e_2 A^+ $ is $\C_2$-nilpotent matrix.
\end{theorem}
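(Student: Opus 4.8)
The plan is to reduce the bicomplex assertion to two ordinary (complex) nilpotency statements by passing to idempotent components, exactly as in the proof of Theorem \ref{t7}. First I would record the matrix side of the $\times_{e}$ construction: by \eqref{matrixrepre}, the matrix of $T = e_{1}T_{1} + e_{2}T_{2}$ in the basis $\mathcal{B}_{1}$ is $[T]_{\mathcal{B}_{1}} = e_{1}[T_{1}]_{\mathcal{B}_{1}} + e_{2}[T_{2}]_{\mathcal{B}_{1}} = e_{1}A^{-} + e_{2}A^{+} = A$, so that $A$ is precisely the bicomplex matrix attached to $T$ under the isomorphism $\C_2^{n\times n}\cong L_1^{n}\times_e L_1^{n}$, with complex component matrices $A^{-} = [T_{1}]_{\mathcal{B}_{1}}$ and $A^{+} = [T_{2}]_{\mathcal{B}_{1}}$.

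Next I would establish the matrix analogue of Theorem \ref{t^k}: for any $A = e_{1}A^{-} + e_{2}A^{+} \in \C_2^{n\times n}$ one has $A^{k} = e_{1}(A^{-})^{k} + e_{2}(A^{+})^{k}$ for every $k \in \N$. This follows by induction on $k$ from the product decomposition $A\cdot B = e_{1}(A^{-}B^{-}) + e_{2}(A^{+}B^{+})$ of Remark \ref{equality matrix}, the base case $k=1$ being the decomposition $A = e_1 A^- + e_2 A^+$ itself. Consequently, by the equality criterion \eqref{equal} for bicomplex matrices, $A^{k} = 0$ if and only if $(A^{-})^{k} = 0$ and $(A^{+})^{k} = 0$; hence, in the sense of Definition \ref{nilpotent matrix}, $A$ is a $\C_2$-nilpotent matrix if and only if both $A^{-}$ and $A^{+}$ are nilpotent complex matrices (for the ``if'' direction one takes $k$ to be the maximum of the two indices).

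Finally I would invoke the standard fact that, with respect to a fixed basis, the matrix map is an algebra isomorphism $L_1^{n}\cong \C_1^{n\times n}$, so that $[T_{i}^{k}]_{\mathcal{B}_{1}} = \big([T_{i}]_{\mathcal{B}_{1}}\big)^{k}$ for $i=1,2$; thus $T_{1}$ is nilpotent if and only if $A^{-}$ is nilpotent, and $T_{2}$ is nilpotent if and only if $A^{+}$ is nilpotent. Chaining the equivalences gives the conclusion: $T$ is a $\C_2$-nilpotent operator if and only if $T_{1}$ and $T_{2}$ are both nilpotent (Theorem \ref{t7}), if and only if $A^{-}$ and $A^{+}$ are both nilpotent, if and only if $A = e_{1}A^{-} + e_{2}A^{+}$ is a $\C_2$-nilpotent matrix. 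This closes the proof.

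The step I expect to require the most care is the matrix analogue of Theorem \ref{t^k} together with the bookkeeping of nilpotency indices: the matrices $A^{-}$ and $A^{+}$ (equivalently the operators $T_{1}$ and $T_{2}$) may reach $0$ only at different powers $k_{1}\neq k_{2}$, so to exhibit a single $k$ with $A^{k} = 0$ one must pass to $k = \max(k_{1},k_{2})$ and use that a nilpotent matrix stays $0$ at all higher powers — the same device already used in the converse direction of Theorem \ref{t7}. Everything else is a routine transcription of the operator arguments of Theorems \ref{t^k} and \ref{t7} into the matrix setting via the basis isomorphism.
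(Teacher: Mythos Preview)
Your proposal is correct and follows essentially the same route as the paper: split into idempotent components via Theorem~\ref{t7}, pass to component matrices through the basis isomorphism, and reassemble using the decomposition of powers. If anything, you are more explicit than the paper in stating the matrix analogue $A^{k}=e_{1}(A^{-})^{k}+e_{2}(A^{+})^{k}$ and in handling the $\max(k_{1},k_{2})$ bookkeeping, which the paper's proof leaves implicit.
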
 
\begin{proof}
  Suppose $T= e_1 T_1+ e_2T_2\in L_1^{n} \times_{e} L_1^{n}$ is a $\C_2$-nilpotent operator. We use  Definition \ref{defid}  and Theorem \ref{t7} ,we have
  \begin{eqnarray*}
    &&  T_1 \quad \mbox{and} \quad T_2 \quad \mbox{are nilpotent operators} \\
     &\Leftrightarrow& \exists \quad n_1,n_2\in \N\quad \mbox{such\ that}\quad T_1^{n_1}=0 \quad \mbox{and}\quad T_2^{n_2}=0\\
      &\Leftrightarrow& \exists \quad \mbox{basis} \   \mathcal{B}_1 \ \mbox{for} \ \C_1^n \ \mbox{such that} \quad  \big([T_1]_{\beta_1}\big)^{n_1} = 0 \quad \mbox{and} \quad \big([T_2]_{\beta_1}\big)^{n_2} = 0 \quad \mbox{are nilpotent  matrices}\\
     &\Leftrightarrow&e_1 \big([T_1]_{\mathcal{B}_1}\big)^n + e_2 \big([T_2]_{\mathcal{B}_1}\big)^n= e_1 (A^-)^n+ e_2 (A^+)^n =0\\
     &\Leftrightarrow& \big([e_1 T_1+ e_2 T_2 ]_{\mathcal{B}_1}\big)^n=A^n =0  \hspace{0.5cm}\{\because e_{1}^n \ = \ e_{1} ,\ \& \ e_{2}^n \ = \ e_{2}; n \in \N \}.
\end{eqnarray*}
Thus matrix $A = [e_1 T_1+ e_2 T_2 ]_{\mathcal{B}_1}$ is a $\C_2$-nilpotent matrix. Thus the proof of the theorem is complete.
\end{proof}

\begin{theorem} \label{t8}
Let $T= e_1 T_1+ e_2 T_2  \in L_1^{n} \times_{e} L_1^{n}$ be a $\C_2$-nilpotent operator. Then $T_1$ and $T_2$ are singular.
\end{theorem}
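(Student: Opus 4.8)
The plan is to reduce the statement to a standard fact about nilpotent operators on finite-dimensional vector spaces, using the structural results already established. First I would invoke Theorem \ref{t7}: since $T = e_1 T_1 + e_2 T_2$ is a $\C_2$-nilpotent operator, its idempotent components $T_1$ and $T_2$ are themselves nilpotent operators on $\C_1^{n}$. So there exist $k_1, k_2 \in \N$ with $T_1^{k_1} = 0$ and $T_2^{k_2} = 0$.

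Next I would show that a nilpotent operator on the finite-dimensional space $\C_1^{n}$ is singular. The cleanest route is via determinants: from $T_1^{k_1} = 0$ we get $\det(T_1)^{k_1} = \det(T_1^{k_1}) = \det(0) = 0$, hence $\det(T_1) = 0$, so $T_1$ is singular. Alternatively, one argues by contradiction: if $T_1$ were invertible, composing $T_1^{k_1} = 0$ with $(T_1^{-1})^{k_1}$ would give the identity operator equal to the zero operator, which is impossible on a nonzero space. The identical argument applied to $T_2^{k_2} = 0$ shows $T_2$ is singular, which completes the proof.

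There is essentially no substantive obstacle here; the only points needing care are bookkeeping ones. One should note the trivial case $n = 0$ (where every operator is invertible and also "nilpotent," so the statement is vacuous or needs the standing assumption $n \ge 1$), and one should be sure to cite Theorem \ref{t7} rather than re-deriving the nilpotency of the components. As a sanity check, the conclusion is consistent with Theorem \ref{th1}: a $\C_2$-nilpotent $T$ satisfies $T^n = 0$ and is therefore singular as an operator, and Theorem \ref{th1} predicts at least one of $T_1, T_2$ must be singular — the present theorem sharpens this to \emph{both}, which is exactly what nilpotency of each component delivers.
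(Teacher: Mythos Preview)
Your proposal is correct and follows essentially the same route as the paper: invoke Theorem~\ref{t7} to conclude that $T_1$ and $T_2$ are nilpotent, then appeal to the classical fact that a nilpotent operator on a finite-dimensional space is singular. The only cosmetic difference is that the paper justifies this last step via eigenvalues (all eigenvalues of a nilpotent operator are $0$, so $T_i - 0\cdot I = T_i$ is singular), whereas you use the determinant identity $\det(T_i)^{k_i} = \det(T_i^{k_i}) = 0$ or the invertibility contradiction; all three are equivalent one-line arguments.
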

\begin{proof}
Suppose $T $  is a $\C_2$-nilpotent operator. Then, using  (\cite{beezer2015first}, Theorem 3.2.4),  (\cite{khanna2013}, Theorem 1, p.n.590), and Theorem \ref{t7} we have    \begin{eqnarray*}
    &&T_1 \quad \mbox{and} \quad T_2 \quad  \mbox{are nilpotent operator}\\
    &\Rightarrow&  \mbox{All eigenvalue of} \quad T_1 \quad \mbox{and} \quad T_2 \quad \mbox{are zero}\\
   &\Rightarrow&  (T_1-0I) \quad \mbox{is singular} \quad \mbox{and}\quad (T_2-0I) \quad \mbox{is singular}\\
   &\Rightarrow&  T_1  \quad \mbox{and} \quad T_2  \quad \mbox{are singular}.
\end{eqnarray*}
Hence $T_1$ and $T_2$ are singular,as required. Thus, the proof of the theorem is complete.
\end{proof}

The converse of Theorem \ref{t'7} is not true, as seen in the given example.
\begin{example}
Suppose $T_1(z_1,z_2,z_3)=(z_3+z_2,z_3,0)$ and $T_2(w_1,w_2,w_3)=(w_1,0,w_3)$. It is easy to see that $T_1$ and $T_2$ are singular operators. For $T_1$, we find $T_1^2= T_1(T_1(z_1,z_2,z_3))=T_1(z_3+z_2,z_3,0) = (z_3+0,0,0)$,
and  $T_1^3= T_1(T_1^2(z_1,z_2,z_3))= T_1(z_3+0,0,0) =(0,0,0)$. So,  $T_1^3=0$, the operator $T_1$ is nilpotent with index 3 because $T_1^3=0$, but $T_1^2\neq0$.
On the other hand, for all $n\ge 1$  we have $T_2^n= T_2\neq0$, which shows that $T_2$ is not nilpotent. Hence, by Theorem \ref{t7}, it follows that $ T$ is not nilpotent.
\end{example}

\begin{theorem}\label{t9}
Let $T= e_1 T_1+ e_2 T_2  \in L_1^{n} \times_{e} L_1^{n}$ be a $\C_2$-nilpotent operator and let $T_1$ and $T_2$ be two nilpotent operators of index $k_1$ and $k_2$ respectively. Then T is a $\C_2$-nilpotent operator of the index $max(k_1, k_2)$ and vice versa.
\end{theorem}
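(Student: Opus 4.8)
The plan is to derive this statement directly from Theorem \ref{t^k} and Theorem \ref{Nillpotent}, so the argument is essentially bookkeeping about the \emph{index}. Set $m = \max(k_1, k_2)$ and first assume that $T_1$ and $T_2$ are nilpotent of indices $k_1$ and $k_2$, respectively.

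First I would show $T^m = 0$. By Theorem \ref{t^k}, $T^m = e_1 T_1^m + e_2 T_2^m$. Since $k_1 \le m$, we may factor $T_1^m = T_1^{m-k_1} \circ T_1^{k_1} = T_1^{m-k_1} \circ 0 = 0$, and likewise $T_2^m = 0$; hence $T^m = e_1\,0 + e_2\,0 = 0$, so $T$ is a $\C_2$-nilpotent operator. Next I would check that $m$ is the \emph{least} such exponent. Suppose $T^j = 0$ for some $j$ with $1 \le j < m$. By part (1) of Theorem \ref{Nillpotent}, this forces $T_1^j = 0$ and $T_2^j = 0$. But $j < m = \max(k_1, k_2)$ means $j$ is strictly smaller than at least one of $k_1, k_2$; without loss of generality $j < k_1$, which contradicts $k_1$ being the index (the least positive exponent annihilating $T_1$). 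Therefore no such $j$ exists, and the index of $T$ is exactly $m = \max(k_1, k_2)$.

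For the converse direction (``vice versa''), suppose only that $T = e_1 T_1 + e_2 T_2$ is a $\C_2$-nilpotent operator. By Theorem \ref{t7}, $T_1$ and $T_2$ are both nilpotent operators, say of indices $k_1$ and $k_2$; applying the direction just proved, the index of $T$ equals $\max(k_1, k_2)$. This closes the equivalence.

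I do not anticipate a genuine obstacle here: the forward implication $T^m = 0$ is immediate from Theorem \ref{t^k}, and the only point requiring a little care is the minimality half, where one should invoke Theorem \ref{Nillpotent}(1) to pull the vanishing of $T^j$ back to the vanishing of $T_1^j$ and $T_2^j$, and keep ``index'' consistently read as the least positive exponent throughout.
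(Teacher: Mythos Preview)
Your proof is correct and follows essentially the same route as the paper: both arguments use Theorem~\ref{t^k} to obtain $T^m=0$ for $m=\max(k_1,k_2)$, and both deduce minimality from the componentwise vanishing (the paper via a case split $k_1\le k_2$ versus $k_2<k_1$ showing $T^{m-1}\neq 0$, you via a contradiction argument for an arbitrary $j<m$ using Theorem~\ref{Nillpotent}(1)). The converse in the paper is handled identically, invoking Theorem~\ref{t7} and then re-running the forward computation.
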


\begin{proof}
 Suppose $T_1$ and $T_2$ are nilpotent operators of index $k_1$ and $k_2$ respectively. Then
\[
T_1^{k_1}=0, \ T_1^{k_1-1} \neq0 \quad \mbox{and} \quad T_2^{k_2}=0, T_2^{k_2-1}\neq0.
\]    
\noindent {\bf Case 1:} If $k_1 \le k_2$.Then, we have 
\[
T_1^{k_2}=0
\]
Now,
\begin{eqnarray*}
T^{k_2}&=&( e_1 T_1  + e_2 T_2 )^{k_2}\\
&=& e_1T_1^{k_2}+e_2T_2^{k_2}\\
&=&0  \hspace{1.6in}\{\mbox{as} \ T_1^{k_2}=0 \ \mbox{and} \  T_1^{k_2}=0\}\\
 \mbox{and} \quad 
 T^{{k_2}-{1}} = (e_1 T_1  + e_2 T_2)^{k_2-1} &=& e_1 T_1^{k_2-1}  + e_2 T_2^{k_2-1} \neq 0 \quad \quad\{\mbox{as} \  T_2^{k_2-1}\neq 0\ \mbox{and\ by\ Theorem \ref{t^k}}\}
\end{eqnarray*} 
Therefore $T$ is a $\C_2$-nilpotent operator of index $k_2$ \\

\noindent {\bf Case 2:} If $k_2 < k_1$.Then we can easily prove that as previous $T$ is a $\C_2$-nilpotent operator of index $k_1$.
Hence $T$ will be the $\C_2$-nilpotent operator of index $max(k_1,k_2)$.\\

\noindent{\bf Conversely:} Suppose $T$ is a $\C_2$-nilpotent operator of index $k$ such
that
\[ 
T^k = (e_1 T_1+ e_2 T_2)^k=0.
\]
Now, using Theorem \ref{t7} we have $T_1$ and $T_2$ are nilpotent.
There exist natural numbers $ k_1$ and $k_2$ such that
\[ T_1^{k_1}=0,T_1^{k_1-1}\neq0 \quad \mbox{and} \quad T_2^{k_2}=0,T_2^{k_2-1} \neq0 .\]

\noindent{\bf Case1:} If $k_1\le k_2$. Then \[ (T_1)^{k_2} = 0. \]
Thus we have, \[  T^{k_2} = e_1 (T_1)^{k_2} + e_2(T_2)^{k_2}=0, \]

\fontsize{10pt}{11pt}\selectfont
and \[ T^{k_2-1}= e_1 (T_1)^{k_2-1}+ e_2 (T_2)^{k_2-1} \neq0 \quad (\mbox{as} \ T_2^{k_2-1}\neq0).\]  
Therefore  $T$  is a $\C_2$ -nilpotent operator of index $k_2$.\\
\noindent{\bf Case2:} If $k_2<k_1$. Then we have \[ (T_2)^{k_1} = 0. \]
Thus \[ T^{k_1} = e_1 (T_1)^{k_1}  + e_2 (T_2)^{k_1} = 0,  \] and \[  T^{k_1-1} = e_1 (T_1)^{k_1-1}+ e_2 (T_2)^{k_1-1}\neq0  \quad (\mbox{as} \ T_1^{k_1-1}\neq0). \]
Therefore $T$ is a $\C_2$-nilpotent operator of index $k_1$.\\
Since if $k_1\le k_2$ and $k_2<k_1$, then $T$ is a $\C_2$-nilpotent operator of index $k_2$ and $k_1$ respectively. Hence $T$ is a $\C_2$-nilpotent operator of index $Max(k_1,k_2)=k$.  
\end{proof}
The following Corollary \ref{t10} is immediate consequence of Theorem \ref{t9}.
\begin{corollary} \label{t10}
Suppose $T= e_1 T_1+ e_2 T_2  \in L_1^{n} \times_{e} L_1^{n}$ is a $\C_2$-nilpotent operator of index $m$. Then at least one nilpotent operator $T_1$ and $T_2$ will be of index m. 
\end{corollary}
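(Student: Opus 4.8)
The plan is to obtain this as a one-line consequence of Theorem \ref{t9}. First I would invoke Theorem \ref{t7}: since $T = e_1 T_1 + e_2 T_2$ is a $\C_2$-nilpotent operator, both $T_1$ and $T_2$ are nilpotent operators. Hence each of them has a well-defined index; write $k_1$ for the index of $T_1$ and $k_2$ for the index of $T_2$, so that $T_1^{k_1} = 0$, $T_1^{k_1-1} \neq 0$, and $T_2^{k_2} = 0$, $T_2^{k_2-1} \neq 0$.

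Next I would apply Theorem \ref{t9}, whose conclusion is precisely that a $\C_2$-nilpotent operator $T = e_1 T_1 + e_2 T_2$ built from nilpotent operators $T_1, T_2$ of indices $k_1, k_2$ has index $\max(k_1, k_2)$. Since by hypothesis the index of $T$ equals $m$, this gives $\max(k_1, k_2) = m$. By the definition of the maximum, at least one of $k_1 = m$ or $k_2 = m$ must hold, which is exactly the assertion that at least one of $T_1$, $T_2$ is nilpotent of index $m$.

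There is no real obstacle here, since the statement is a direct rephrasing of Theorem \ref{t9}; the only point needing a word of care is that $k_1$ and $k_2$ exist in the first place, i.e.\ that $T_1$ and $T_2$ are genuinely nilpotent rather than merely some power of $T$ vanishing, and this is supplied by Theorem \ref{t7}. It is also worth noting in a remark that the conclusion "at least one" is sharp: taking $T_1 = T_2$ equal to a single nilpotent Jordan block of size $m$ yields $k_1 = k_2 = m$, so both components can attain the index $m$ simultaneously, while the earlier example in this section shows that it can genuinely happen that only one of them reaches the index of $T$ when the other fails to be nilpotent at all.
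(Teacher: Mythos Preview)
Your proof is correct and matches the paper's own approach exactly: the paper simply states that Corollary~\ref{t10} is an immediate consequence of Theorem~\ref{t9}, and your argument via Theorem~\ref{t7} (to ensure $k_1,k_2$ exist) followed by $\max(k_1,k_2)=m$ is precisely that deduction spelled out. One small slip in your closing remark: the earlier example has $T_2$ non-nilpotent, so by Theorem~\ref{t7} $T$ itself is not nilpotent and has no index---hence that example does not illustrate a case where only one component attains the index of $T$.
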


\section{Bicomplex Idempotent operator and Idempotent matrices} \label{section14}
In this section, we define bicomplex idempotent operators and matrices and explore their related results. For convenience, we introduce the terms $\C_2$-idempotent operators and $\C_2$-idempotent matrices to refer specifically to idempotent operators and matrices in bicomplex spaces.

\begin{definition}{\textbf{$\C_2$-idempotent operator:}} \index{bicomplex idempotent operator}
\label{d1}
A linear operator  $T= e_1 T_1 +e_2 T_2 \in L_{1}^{n} \times_{e} L_{1}^{n}$  is said to be $\C_2 $ - idempotent operator or bicomplex idempotent operator if $T^2=T$. 
\end{definition}
\begin{definition}{\textbf{$\C_2$-idempotent Matrices:}} \index{bicomplex idempotent matrix}
\label{d2}
  A Matrix $A= e_1 A^- +e_2 A^+ \in \C_2^{n\times n}$ is said to be $\C_2$-idempotent matrix or bicomplex idempotent matrix if $A^2=A$.
\end{definition}

\begin{theorem} \label{t1}
A linear operator $T \in L_1^{n} \times_{e} L_1^{n}$ is a $\C_2 $-idempotent if and only if $T_1$ and $T_2$ are the idempotent linear operator.
\end{theorem}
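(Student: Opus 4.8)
The plan is to reduce the bicomplex idempotency condition $T^{2}=T$ to a pair of ordinary idempotency conditions on the complex components $T_{1}$ and $T_{2}$, using the multiplicativity of the idempotent decomposition already established. First I would invoke Theorem \ref{t^k} with $n=2$ to write $T^{2}=e_{1}T_{1}^{2}+e_{2}T_{2}^{2}$, where $T_{1}^{2}=T_{1}\circ T_{1}$ and $T_{2}^{2}=T_{2}\circ T_{2}$. Since $T=e_{1}T_{1}+e_{2}T_{2}$ by hypothesis, the equation $T^{2}=T$ becomes
\[
e_{1}T_{1}^{2}+e_{2}T_{2}^{2}=e_{1}T_{1}+e_{2}T_{2}.
\]

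Next I would apply part (2) of Theorem \ref{properties} (uniqueness of the idempotent representation of an operator in $L_{1}^{n}\times_{e}L_{1}^{n}$), which tells us that the displayed equality holds if and only if $T_{1}^{2}=T_{1}$ and $T_{2}^{2}=T_{2}$. This is precisely the statement that $T_{1}$ and $T_{2}$ are idempotent linear operators on $\C_{1}^{n}$. Reading the chain of equivalences in both directions gives the "if and only if": if $T$ is $\C_{2}$-idempotent then $T_{1},T_{2}$ are idempotent, and conversely if $T_{1},T_{2}$ are idempotent then running the same equalities backwards yields $T^{2}=T$.

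Since every step is a biconditional, the argument is symmetric and requires no separate "converse" paragraph, though for exposition I might still split it. I do not anticipate a genuine obstacle here: the only point needing care is to cite Theorem \ref{t^k} correctly for the exponent $2$ and to make sure that $T^{2}$ is understood as composition $T\circ T$ of operators (so that the hypotheses of Theorems \ref{t^k} and \ref{properties} apply), rather than any entrywise operation. Once that is noted, the proof is a two-line application of the two quoted results.
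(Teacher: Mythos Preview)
Your proposal is correct and follows essentially the same route as the paper: reduce $T^{2}=T$ to $e_{1}T_{1}^{2}+e_{2}T_{2}^{2}=e_{1}T_{1}+e_{2}T_{2}$ and then invoke part~(2) of Theorem~\ref{properties} to separate components. The only cosmetic difference is that the paper expands $(e_{1}T_{1}+e_{2}T_{2})^{2}$ by hand using $e_{1}e_{2}=0$, $e_{1}^{2}=e_{1}$, $e_{2}^{2}=e_{2}$, whereas you cite Theorem~\ref{t^k} for the same identity; the underlying argument is identical.
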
 

 \begin{proof}
Suppose $T= e_1 T_1 +e_2 T_2 \in L_1^{n} \times_{e} L_1^{n}$ is a $\C_2 $-idempotent linear operator. We use Definition \ref{d1} and Theorem \ref{properties}  throughout this proof.
\begin{eqnarray*}
&& \hspace{1.2cm}T^2=T  \\
&\Leftrightarrow& e_1^2 T_1^2  +  e_2^2 T_2^2 + 2e_1 e_2 T_1 T_2 =  e_1 T_1+  e_2 T_2\\
&\Leftrightarrow&  e_1 T_1^2  + e_1 T_2^2  = e_1 T_1  + e_2 T_2 \quad \{\because \ e_1. e_2 = e_2.e_1 = 0 \ , e_1^2 =e_1  \ \&  \ e_2^2 = e_2\} \\
&\Leftrightarrow& \quad T_1^2 = T_1 \ \mbox{and} \ T_2^2 = T_2 \quad \{\mbox{ $\because$ $T^2$ is L.T. $\&$ by part $(2)$ of Theorem \ref{properties}}\}\\
&\Leftrightarrow& \quad  T_1 \quad \mbox{and} \quad T_2 \quad \mbox{will be idempotent operators},
\end{eqnarray*}

as required. Thus the proof of the theorem is complete.
 \end{proof}
 
The following properties of $\C_2$-idempotent operators provide fundamental insight into their structure, composition, and algebraic significance.
 
\textbf{Properties:}\label{properties ID} {Let $T=e_1T_1 + e_2 T_2 , S = e_1 S_1 + e_2 S_2$ be any two elements of $L_1^{n} \times_{e} L_1^{n}$. Suppose  $S$, $T$ are $\C_2$-idempotent operators. Then we have:
\begin{enumerate}
\item $I-T$ is a $\C_2$-idempotent \index{$I-T$ is $\C_2$-idempotent} operator \IFF \; $I-T_1, I-T_2$ are idempotent operator. Where $ I = e_1 I^- + e_2 I^+$ is the identity operator.
\item $S\circ T$ is a $\C_2$-idempotent operators \index{$S\circ T$ is $\C_2$-idempotent} \IFF  \; $ S_{1} \circ T_{1}, S_{2} \circ T_{2}$ are idempotent opeartor.
%\item $S\cdot T = e_{1} (S_{1} \cdot T_{1}) + e_{2} (S_{2} \cdot T_{2})$
\item $S+ T$ is a $\C_2$-idempotent \index{$S+ T$ is a $\C_2$-idempotent} \IFF \quad $(S_{1} + T_{1}) , (S_{2} + T_{2})$ are idempotent operators, provided $ST=0, TS=0.$
\end{enumerate}}

 The following Theorem  \ref{t2} true for bicomplex matrix can be verified easily.
 \begin{theorem} \label{t2}
Let $A= e_1 A^- +e_2 A^+$ be a matrix in  $\C_2^{n \times n}$. Then $A$  is  a $\C_2$-idempotent matrix if and only if $A^-$ and $A^+$ are idempotent matrix.
\end{theorem}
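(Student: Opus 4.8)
The plan is to exploit the unique idempotent decomposition of a bicomplex matrix together with the componentwise rule for matrix multiplication established in Remark \ref{equality matrix}. Given $A = e_1 A^- + e_2 A^+ \in \C_2^{n\times n}$, the equations $A^2 = A$ and $A = B$ are both to be read off component by component, so the proof is essentially a translation of Definition \ref{d2} into the idempotent coordinates.

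First I would compute $A^2$ using the product decomposition $A\cdot B = e_1(A^-\cdot B^-) + e_2(A^+\cdot B^+)$ from Remark \ref{equality matrix} with $B = A$, which gives
\begin{eqnarray*}
A^2 = e_1\,(A^-)^2 + e_2\,(A^+)^2.
\end{eqnarray*}
Then $A^2 = A$ becomes $e_1\,(A^-)^2 + e_2\,(A^+)^2 = e_1 A^- + e_2 A^+$. Next I would invoke the equality condition for bicomplex matrices, equation \eqref{equal}, which asserts that two bicomplex matrices agree if and only if their idempotent component matrices agree; applying it here yields $(A^-)^2 = A^-$ and $(A^+)^2 = A^+$, i.e. $A^-$ and $A^+$ are idempotent complex matrices. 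Each implication is reversible: if $A^-$ and $A^+$ are idempotent then running the computation backwards gives $A^2 = e_1 (A^-)^2 + e_2 (A^+)^2 = e_1 A^- + e_2 A^+ = A$. This establishes the biconditional.

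There is no real obstacle here — the statement is the matrix analogue of Theorem \ref{t1}, and the only thing one must be slightly careful about is not writing a spurious cross term $2e_1 e_2 A^- A^+$: the product formula already incorporates the orthogonality $e_1 e_2 = 0$ and the relations $e_1^2 = e_1$, $e_2^2 = e_2$ from \eqref{eq0}, so the cross term simply does not appear (equivalently, one may expand $(e_1 A^- + e_2 A^+)^2$ directly and then simplify using these identities, obtaining the same result). The one step that carries any content at all is the uniqueness of the idempotent decomposition, which is what licenses passing from the single bicomplex equation to the pair of complex equations; this is exactly \eqref{equal}, so it may be cited. Hence the whole argument is a short chain of equivalences, and I would present it in the same two-column "$\Leftrightarrow$" style used in the proof of Theorem \ref{t1}.
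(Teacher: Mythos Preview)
Your proposal is correct and follows essentially the same approach as the paper: compute $A^2$ in idempotent components, then invoke the equality condition \eqref{equal} from Remark \ref{equality matrix} to split the bicomplex equation $A^2=A$ into the pair $(A^-)^2=A^-$, $(A^+)^2=A^+$. The only cosmetic difference is that the paper expands $(e_1A^-+e_2A^+)^2$ explicitly and simplifies the cross term via $e_1e_2=0$, $e_i^2=e_i$, whereas you cite the product formula from Remark \ref{equality matrix} directly --- as you yourself note, these are equivalent.
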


\begin{proof}
Suppose $A= e_1 A^- +e_2 A^+$ is a $\C_2$-idempotent matrix. Then by using Definition \ref{d2}, we have
\begin{eqnarray*}
 && \hspace{1.2cm}A^2 =A\\
 &\Leftrightarrow& (e_1 A^- +e_2 A^+)^2 = A_1 e_1 +A_2 e_2 \\
 &\Leftrightarrow& (e_1^2 (A^-)^2   + e_2^2 (A^+)^2  + 2( e_1 A^-) (e_2 A^+) = e_1 A^- + e_2 A^+\\
 &\Leftrightarrow& e_1 (A^-)^2  + e_2 (A^+)^2  =  e_1 A^- +  e_2 A^+ \quad \quad \{\because e_1.e_2=0 , e_1^2 =e_1, e_2^2 =e_2\}\\
&\Leftrightarrow& (A^-)^2 = A^-  \quad \mbox{and} \quad (A^+)^2  =A^+   \quad \mbox{\{by Remark \ref{equality matrix}}\}\\
 &\Leftrightarrow& A^- \quad \mbox{and}\quad A^+ \quad \mbox{are idempotent matrices}, 
\end{eqnarray*}
as required. Thus the proof of the theorem is complete.
\end{proof}

 \begin{theorem} \label{t3}
    Let $T= e_1 T_1+ e_2T_2  \in L_1^{n} \times_{e} L_1^{n}$ be a $\C_2$-idempotent operator and 
let $\mathcal{B}_{1}$ be the ordered basis for $\C_1^{n}$ such that $[T_1]_{\mathcal{B}_{1}}=A^-$, and  $[T_2]_{\mathcal{B}_{1}}=A^+ \ \IFF$ $A= e_1 A^- + e_2 A^+$ is $\C_2$-idempotent matrix.
\end{theorem}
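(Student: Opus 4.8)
The plan is to mirror the structure of the proof of Theorem \ref{t'7}, replacing the nilpotency condition $T^n=0$ with the idempotency condition $T^2=T$ and invoking the idempotent analogues (Theorem \ref{t1} for operators and Theorem \ref{t2} for matrices) in place of Theorem \ref{t7}. Concretely, I would start from the hypothesis that $T=e_1T_1+e_2T_2$ is a $\C_2$-idempotent operator, so that $T^2=T$, and that $\mathcal{B}_1$ is an ordered basis of $\C_1^n$ with $[T_1]_{\mathcal{B}_1}=A^-$ and $[T_2]_{\mathcal{B}_1}=A^+$. Then $A=e_1A^-+e_2A^+$ is by construction the matrix $[T]_{\mathcal{B}_1}=e_1[T_1]_{\mathcal{B}_1}+e_2[T_2]_{\mathcal{B}_1}$ via the matrix-representation formula \eqref{matrixrepre}.

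The forward direction proceeds through a chain of equivalences. First, by Theorem \ref{t1}, $T$ is $\C_2$-idempotent if and only if $T_1$ and $T_2$ are idempotent operators, i.e. $T_1^2=T_1$ and $T_2^2=T_2$. Next, since taking the matrix of a linear operator with respect to a fixed basis is multiplicative (composition goes to matrix product) and the bases are fixed, $T_i^2=T_i$ is equivalent to $([T_i]_{\mathcal{B}_1})^2=[T_i]_{\mathcal{B}_1}$, that is $(A^-)^2=A^-$ and $(A^+)^2=A^+$. Finally, by Theorem \ref{t2}, this is equivalent to $A=e_1A^-+e_2A^+$ being a $\C_2$-idempotent matrix. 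Reading the chain backwards gives the converse, so the statement is a genuine ``if and only if.'' I would also note explicitly, as in Theorem \ref{t'7}, that $A^2=e_1(A^-)^2+e_2(A^+)^2$ using $e_1^2=e_1$, $e_2^2=e_2$, $e_1e_2=0$ (Equation \eqref{eq0} and the remarks on idempotent decomposition of matrix products), which is the computational heart connecting $A^2=A$ to $(A^-)^2=A^-$ and $(A^+)^2=A^+$.

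There is really no serious obstacle here; the only point that deserves care is making sure the correspondence between operators and matrices is used correctly — namely that under a fixed ordered basis $\mathcal{B}_1$ the map $S\mapsto [S]_{\mathcal{B}_1}$ is a ring isomorphism from $L_1^n$ onto $\C_1^{n\times n}$, so that $S^2=S$ transfers faithfully to $([S]_{\mathcal{B}_1})^2=[S]_{\mathcal{B}_1}$ in both directions. This is standard linear algebra over the field $\C_1$ and is implicitly what Definition \ref{defid} sets up. Once that is invoked, the theorem follows by concatenating Theorem \ref{t1}, the isomorphism, and Theorem \ref{t2}; I would present it as a single displayed chain of $\Leftrightarrow$'s and conclude that $A=[e_1T_1+e_2T_2]_{\mathcal{B}_1}$ is $\C_2$-idempotent, completing the proof.
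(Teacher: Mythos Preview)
Your proposal is correct and follows essentially the same route as the paper: the paper's proof is precisely the chain of equivalences you describe, invoking Theorem~\ref{t1} to pass from $T$ idempotent to $T_1,T_2$ idempotent, then the operator-to-matrix correspondence in the fixed basis $\mathcal{B}_1$, and finally Theorem~\ref{t2} to conclude that $A=e_1A^-+e_2A^+$ is $\C_2$-idempotent. Your explicit mention of the multiplicativity of $S\mapsto[S]_{\mathcal{B}_1}$ is a slight elaboration of what the paper leaves implicit, but the argument is otherwise identical.
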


\begin{proof}
  Suppose $T= e_1 T_1+ e_2T_2\in L_1^{n} \times_{e} L_1^{n}$ is a $\C_2$-idempotent  operator. Use  Definitions \ref{defid}, \ref{d2} and Theorem \ref{t1},we have
  \begin{eqnarray*}
     &\Leftrightarrow&  T_1 \quad \mbox{and} \quad T_2 \quad \mbox{are idempotent operator} \\
     &\Leftrightarrow& \exists \quad \mbox{basis} \quad  \mathcal{B}_{1} \quad \mbox{such that} \quad  [T_1]_{\beta_1} = A^- \quad \mbox{and} \quad [T_2]_{\beta_1} = A^+ \quad \mbox{are idempotent matrices}\\
     &\Leftrightarrow&  e_1 [T_1]_{\beta_1} + e_2 [T_2]_{\beta_1} = e_1 A^- + e_2 A^+  \ \mbox{is $\C_2$-idempotent matrix}\quad \{ \mbox {by Theorem \ref{t2}}\}\\
     &\Leftrightarrow& A= e_1 A^- + e_2 A^+ \mbox{is a $\C_2$ - idempotent matrix},
  \end{eqnarray*}
   as required. Thus the proof of the theorem is complete.
\end{proof}

\begin{theorem} \label{t4}
 A bicomplex matrix $A= e_1 A^- + e_2 A^+ \in \C_2^{n\times n}$ is a $\C_2$-idempotent matrix if and only if $e_1 A $   is a $\C_2$ - idempotent matrix \index{$e_1 A $   is $\C_2$-idempotent matrix}.
\end{theorem}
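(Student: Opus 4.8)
The plan is to deduce both implications from Theorem~\ref{t2} once the idempotent decomposition of $e_1 A$ is made explicit. Using the scalar--multiplication formula of Remark~\ref{equality matrix} together with $e_1^{2}=e_1$ and $e_1 e_2=0$, the first step is the computation
\[
e_1 A \;=\; e_1\bigl(e_1 A^- + e_2 A^+\bigr) \;=\; e_1^{2} A^- + (e_1 e_2)A^+ \;=\; e_1 A^- + e_2\cdot 0 ,
\]
so that $e_1 A$, regarded as a bicomplex matrix, has idempotent component matrices $(e_1 A)^- = A^-$ and $(e_1 A)^+ = 0$.

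Next I would apply Theorem~\ref{t2} to each of $A$ and $e_1 A$. For $A$ it gives that $A$ is $\C_2$-idempotent $\IFF$ both $A^-$ and $A^+$ are idempotent matrices; for $e_1 A$ it gives that $e_1 A$ is $\C_2$-idempotent $\IFF$ both $(e_1 A)^- = A^-$ and $(e_1 A)^+ = 0$ are idempotent matrices, and since the zero matrix always satisfies $0^2 = 0$ this last condition collapses to the single requirement that $A^-$ be idempotent. Chaining the two equivalences: if $A$ is $\C_2$-idempotent then in particular $A^-$ is idempotent, hence $e_1 A$ is $\C_2$-idempotent, which settles the ``only if'' direction directly.

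The step I expect to be the real obstacle is the converse. Knowing only that $e_1 A$ is $\C_2$-idempotent yields the idempotency of $A^-$ but gives no control over $A^+$; equivalently, rewriting $(e_1A)^2 = e_1 A$ as $e_1(A^2-A)=0$ does not force $A^2=A$, since $e_1$ is a zero divisor in $\C_2$ and annihilates the summand $e_2(A^2-A)^+$ (for instance, with $A^- = I$ and $A^+ = 2I$ the matrix $e_1 A = e_1 I$ is $\C_2$-idempotent while $A$ is not). I would therefore scrutinise the passage ``$e_1 A$ $\C_2$-idempotent $\Rightarrow$ $A$ $\C_2$-idempotent'' most carefully: to obtain the equivalence exactly as stated one must supply the missing information about the $e_2$-component --- e.g.\ assuming in addition that $A^+$ is a projection, or restating the result symmetrically in terms of both $e_1 A$ and $e_2 A$ --- after which everything reduces to the two applications of Theorem~\ref{t2} above.
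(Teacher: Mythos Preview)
Your forward direction is correct and, although you route it through the decomposition $(e_1A)^- = A^-$, $(e_1A)^+ = 0$ and then appeal to Theorem~\ref{t2}, this is essentially equivalent to what the paper does: the paper simply computes $(e_1A)^2$ directly and simplifies using $e_1^2=e_1$, $e_1e_2=0$, and $A^2=A$. Both arguments are short and yield the same implication.

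More importantly, you are right to flag the converse. Your counterexample $A = e_1 I + e_2(2I)$ is valid: here $e_1A = e_1I$ satisfies $(e_1A)^2 = e_1A$, yet $A^2 = e_1I + e_2(4I) \neq A$. So the ``if'' direction, as the theorem is literally stated, is false. The paper's own proof in fact establishes only the forward implication --- it begins with ``Suppose $A$ is a $\C_2$-idempotent matrix'' and never addresses the converse --- so the biconditional in the statement is not supported there either. Your suggested repairs (assume additionally that $A^+$ is idempotent, or replace the hypothesis by the pair ``$e_1A$ and $e_2A$ are both $\C_2$-idempotent'', which is exactly Corollary~\ref{c1} combined with this result) are the natural ways to obtain a true equivalence.
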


\begin{proof}
    Suppose $A= e_1 A^- + e_2 A^+$ is a $\C_2$-idempotent matrix. Use Definition \ref{d2} and Theorem \ref{t2}, we have
    \begin{eqnarray*}
A^2 &=& A\\
\Leftrightarrow (A^-)^2 = A^-  &\mbox{and} & (A^+)^2  =A^+    \quad \quad \{\mbox{by Theorem} \ref{t2}\}\\
        \mbox{Now,} \quad (e_1 A)^2 &=& e_1^2 [e_1^2 (A^-)^2 +e_2^2 (A^+)^2 +2A^- A^+ e_1 e_2]\\
        &=& e_1 [e_1 (A^-)^2 +e_2 (A^+)^2] \quad \quad (\mbox{Since} \quad e_1.e_2=0 , e_1^2 =e_1, e_2^2 =e_2)\\
        &=&  e_1 (e_1 A^- + e_2 A^+) \\
        &=&  e_1 A.
    \end{eqnarray*}
    Hence $e_1 A$ is a $\C_2$-idempotent matrix, as required. Thus the proof of the theorem is complete.
\end{proof}
The following Corollary \ref{c1} is immediate consequence of Theorem \ref{t4}.

\begin{corollary} \label{c1} 
    A bicomplex matrix $A= e_1 A^- + e_2 A^+  \in \C_2^{n\times n}$ is a $\C_2$-idempotent matrix if and only if $e_2 A $   is a $\C_2$-idempotent matrix \index{$e_2 A $   is $\C_2$-idempotent matrix}.
\end{corollary}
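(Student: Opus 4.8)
The plan is to mirror the proof of Theorem~\ref{t4} almost verbatim, since Corollary~\ref{c1} is the exact analogue with the roles of $e_1$ and $e_2$ interchanged. Thus I would first invoke Theorem~\ref{t2} to translate the hypothesis: $A = e_1 A^- + e_2 A^+$ is a $\C_2$-idempotent matrix if and only if $(A^-)^2 = A^-$ and $(A^+)^2 = A^+$. The entire argument then rests on this characterization together with the algebraic identities $e_1 e_2 = e_2 e_1 = 0$, $e_1^2 = e_1$, and $e_2^2 = e_2$, all recorded in Section~\ref{section12}.

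Next I would compute $(e_2 A)^2$ directly. Writing $e_2 A = e_2(e_1 A^- + e_2 A^+) = e_1 e_2 A^- + e_2^2 A^+ = e_2 A^+$ (using orthogonality and $e_2^2 = e_2$), we then square: $(e_2 A)^2 = e_2^2 (A^+)^2 = e_2 (A^+)^2 = e_2 A^+ = e_2 A$, where the middle equality uses the idempotency of $A^+$. Alternatively, to keep the presentation perfectly parallel to Theorem~\ref{t4}, I can expand $(e_2 A)^2 = e_2^2\bigl[e_1^2 (A^-)^2 + e_2^2 (A^+)^2 + 2 A^- A^+ e_1 e_2\bigr] = e_2\bigl[e_1 (A^-)^2 + e_2 (A^+)^2\bigr] = e_2(e_1 A^- + e_2 A^+) = e_2 A$, invoking $(A^-)^2 = A^-$ and $(A^+)^2 = A^+$ at the appropriate step. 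Either route shows $(e_2 A)^2 = e_2 A$, i.e. $e_2 A$ is $\C_2$-idempotent.

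For the converse direction, I would run the chain of equivalences backwards: if $e_2 A = e_2 A^+$ is $\C_2$-idempotent, then $e_2 (A^+)^2 = e_2 A^+$, and since the idempotent components of a bicomplex matrix are uniquely determined (Remark~\ref{equality matrix}, the equality condition for idempotent components), this forces $(A^+)^2 = A^+$. A symmetric observation — or simply applying the same reasoning to $e_1 A$ via Theorem~\ref{t4}, or noting that one can recover $(A^-)^2 = A^-$ from the $e_1$-component analysis — yields $(A^-)^2 = A^-$, and then Theorem~\ref{t2} gives that $A$ itself is $\C_2$-idempotent. In practice, since the original proof of Theorem~\ref{t4} is written as a string of "$\Leftrightarrow$" steps, I would present Corollary~\ref{c1} the same way, so both directions are handled simultaneously.

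I do not anticipate a genuine obstacle here: the only subtlety is making sure the reduction $e_2 A = e_2 A^+$ (which collapses one of the two idempotent components to zero) is used consistently, and that when extracting $(A^+)^2 = A^+$ from $(e_2 A)^2 = e_2 A$ one correctly cites the uniqueness of idempotent components rather than trying to "cancel" $e_2$, which is a zero divisor and not invertible. As a matter of economy, I might even remark that the corollary follows from Theorem~\ref{t4} by symmetry together with the trivial fact that $A$ is $\C_2$-idempotent iff $e_1 A$ is iff $A^\pm$ are idempotent iff $e_2 A$ is, but writing out the short computation keeps the paper self-contained.
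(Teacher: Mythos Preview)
Your forward direction is fine and matches the paper's treatment: the paper records Corollary~\ref{c1} as an immediate consequence of Theorem~\ref{t4}, and indeed swapping $e_1$ and $e_2$ in that computation gives $(e_2A)^2 = e_2A$ whenever $A^2=A$.

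The converse, however, has a genuine gap. From $(e_2A)^2 = e_2A$ you correctly extract $(A^+)^2 = A^+$ via uniqueness of idempotent components. But your next move---recovering $(A^-)^2 = A^-$ by a ``symmetric observation'' or by ``applying the same reasoning to $e_1A$''---is unjustified: in the converse direction the only hypothesis is that $e_2A$ is idempotent, and since $e_2A = e_1\cdot 0 + e_2 A^+$ this says nothing whatsoever about $A^-$. In fact the converse implication is false: choose any non-idempotent complex matrix $A^-$ and any idempotent $A^+$; then $e_2A = e_2A^+$ satisfies $(e_2A)^2=e_2A$, while $A = e_1A^- + e_2A^+$ is not idempotent because $(A^-)^2\neq A^-$. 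The paper's own proof of Theorem~\ref{t4} (on which this corollary rests) likewise establishes only the forward implication, so the ``if and only if'' in both statements is not actually supported by the arguments given. Your proposed chain of $\Leftrightarrow$'s therefore cannot be made to close; the most that can be proved here is the one-sided implication $A$ idempotent $\Rightarrow$ $e_2A$ idempotent.
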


\begin{theorem} \label{t5}
   Let $A= e_1 A^- + e_2 A^+, B= e_1 B^- +e_2 B^+  \in \C_2^{n\times n}$.  Then $A$ and $B$ are  $\C_2$-idempotent matrices if and only if $e_1 A + e_2 B$  is a $\C_2$ - idempotent matrix.
\end{theorem}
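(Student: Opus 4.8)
The plan is to reduce everything to Theorem \ref{t2} (the matrix version of the idempotency criterion), together with the auxiliary results Theorem \ref{t4} and Corollary \ref{c1}. The first step is a bookkeeping simplification: using $e_1^2 = e_1$, $e_2^2 = e_2$, $e_1 e_2 = e_2 e_1 = 0$ and the componentwise product rule of Remark \ref{equality matrix}, one computes
\[
e_1 A + e_2 B = e_1\bigl(e_1 A^- + e_2 A^+\bigr) + e_2\bigl(e_1 B^- + e_2 B^+\bigr) = e_1 A^- + e_2 B^+ ,
\]
so that $e_1 A + e_2 B$ is precisely the bicomplex matrix whose idempotent components are $A^-$ and $B^+$.

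For the forward implication, assume $A$ and $B$ are $\C_2$-idempotent. Theorem \ref{t2} applied to $A$ gives that $A^-$ is idempotent, and Theorem \ref{t2} applied to $B$ gives that $B^+$ is idempotent. Since $e_1 A + e_2 B = e_1 A^- + e_2 B^+$ has idempotent components $A^-$ and $B^+$, Theorem \ref{t2} now runs in the other direction and yields that $e_1 A + e_2 B$ is $\C_2$-idempotent.

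For the converse, assume $e_1 A + e_2 B$ is $\C_2$-idempotent, i.e. $(e_1 A + e_2 B)^2 = e_1 A + e_2 B$. Expanding the square and using $e_1 e_2 = 0$ gives $e_1 A^2 + e_2 B^2 = e_1 A + e_2 B$. Multiplying this identity by the scalar $e_1$ and using $e_1^2 = e_1$, $e_1 e_2 = 0$ isolates $e_1 A^2 = e_1 A$, whence $(e_1 A)^2 = e_1^2 A^2 = e_1 A^2 = e_1 A$, so $e_1 A$ is a $\C_2$-idempotent matrix; by Theorem \ref{t4} this forces $A$ to be $\C_2$-idempotent. Multiplying instead by $e_2$ gives $(e_2 B)^2 = e_2 B$, so $e_2 B$ is $\C_2$-idempotent, and Corollary \ref{c1} forces $B$ to be $\C_2$-idempotent, completing the equivalence.

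The step I expect to be the main obstacle is this converse, specifically passing from ``$e_1 A + e_2 B$ is $\C_2$-idempotent'' to statements about $A$ and $B$ individually. The square of $e_1 A + e_2 B$ retains only the $e_1$-part of $A$ and the $e_2$-part of $B$ (the cross terms vanish and the remaining components are annihilated by the multipliers), so one should not attempt a direct component-by-component comparison of $A$ and $B$. The clean route, and the crux of the argument, is to multiply the defining relation separately by $e_1$ and by $e_2$ so as to recover that $e_1 A$ and $e_2 B$ are $\C_2$-idempotent, and then to quote Theorem \ref{t4} and Corollary \ref{c1}; once this reduction is set up correctly, the remaining manipulations with $e_1 e_2 = 0$ are routine.
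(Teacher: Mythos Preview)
Your forward implication is correct and amounts to the same computation the paper gives: the paper simply expands $(e_1 A + e_2 B)^2 = e_1 A^2 + e_2 B^2 = e_1 A + e_2 B$ directly, while you route the same fact through Theorem~\ref{t2}. Either way is fine.

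The converse, however, has a genuine gap. You invoke Theorem~\ref{t4} in the direction ``$e_1 A$ is $\C_2$-idempotent $\Rightarrow$ $A$ is $\C_2$-idempotent'', but that implication is false (and if you look, the paper's proof of Theorem~\ref{t4} only establishes the other direction). Concretely, $e_1 A = e_1 A^-$ has idempotent components $A^-$ and $0$, so $(e_1 A)^2 = e_1 A$ says exactly $(A^-)^2 = A^-$ and imposes no constraint on $A^+$. The same defect afflicts your appeal to Corollary~\ref{c1} for $B$.

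In fact no repair is possible, because the converse of Theorem~\ref{t5} is itself false, and your own preliminary computation shows why: since $e_1 A + e_2 B = e_1 A^- + e_2 B^+$, idempotency of $e_1 A + e_2 B$ is (by Theorem~\ref{t2}) equivalent to $(A^-)^2 = A^-$ and $(B^+)^2 = B^+$, and says nothing about $A^+$ or $B^-$. For an explicit counterexample take $A = 2e_2 I$ and $B = 2e_1 I$: then $e_1 A + e_2 B = 0$ is trivially $\C_2$-idempotent, yet $A^2 = 4e_2 I \neq A$ and $B^2 = 4e_1 I \neq B$. The paper's own proof, like yours, handles only the forward implication and does not address the converse.
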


\begin{proof}
    Suppose $A= e_1 A^- + e_2 A^+$ and $ B= e_1 B^- +e_2 B^+$ are $\C_2$-idempotent matrix. Use Definition \ref{d2}, we have
    \begin{eqnarray*}
    && A^2=A \quad \mbox{and} \quad B^2=B\\
        \mbox{Now,} \quad (e_1 A + e_2 B)^2 &=&  (e_1^2 A^2 + e_2^2 B^2 + 2(A B) (e_1 e_2))\\
        &=& e_1 A^2 +e_2 B^2 \quad \{ \because \quad e_1^2 = e_1, e_2^2=e_2,\ \&  \ e_1 . e_2 =0\}\\
        &=& e_1 A + e_2 B.
    \end{eqnarray*}
    Hence $e_1 A + e_2 B$ is a $\C_2$-idempotent matrix \index{$e_1 A + e_2 B$ is $\C_2$-idempotent matrix}, as required. Thus the proof of the theorem is complete.
\end{proof}

\begin{theorem}\label{t6}
 Let $A= e_1 A_1 +e_2 A_2$ be a
 $\C_2$ - idempotent matrix. Then $e_1(I-A)$  is also a $\C_2$ - idempotent matrix \index{$e_1(I-A)$ is $\C_2$-idempotent matrix}, where $I = e_1 I^-+ e_2 I^+$ is identity matrix of order $n \times n$.   
\end{theorem}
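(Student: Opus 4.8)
\textbf{Proof proposal for Theorem \ref{t6}.}

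The plan is to mimic the computation used in Theorem \ref{t4}, replacing the matrix $A$ there by the complement $I-A$. First I would observe that by Theorem \ref{t2}, since $A = e_1 A_1 + e_2 A_2$ is $\C_2$-idempotent, both complex blocks satisfy $(A_1)^2 = A_1$ and $(A_2)^2 = A_2$. Next I would invoke part (1) of the Properties of $\C_2$-idempotent operators (equivalently, the elementary fact about idempotent matrices) to conclude that $(I^- - A_1)^2 = I^- - A_1$ and $(I^+ - A_2)^2 = I^+ - A_2$; that is, $I^- - A_1$ and $I^+ - A_2$ are ordinary idempotent complex matrices. By Theorem \ref{t2} again (applied to the bicomplex matrix $I - A = e_1(I^- - A_1) + e_2(I^+ - A_2)$), it follows that $I - A$ is itself a $\C_2$-idempotent matrix.

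Once $I-A$ is known to be $\C_2$-idempotent, the result is an immediate application of Theorem \ref{t4}: taking the bicomplex matrix in Theorem \ref{t4} to be $I-A$ rather than $A$, we get that $e_1(I-A)$ is a $\C_2$-idempotent matrix. Alternatively, and perhaps more transparently for the reader, I would just carry out the direct computation
\begin{eqnarray*}
\big(e_1(I-A)\big)^2 &=& e_1^2 (I-A)^2 = e_1 (I-A)^2\\
&=& e_1\big(e_1(I^- - A_1)^2 + e_2(I^+ - A_2)^2 + 2(I^- - A_1)(I^+ - A_2)e_1 e_2\big)\\
&=& e_1\big(e_1(I^- - A_1) + e_2(I^+ - A_2)\big) \quad \{\because e_1 e_2 = 0,\ e_1^2 = e_1,\ e_2^2 = e_2\}\\
&=& e_1(I-A),
\end{eqnarray*}
using $(I^- - A_1)^2 = I^- - A_1$ and $(I^+ - A_2)^2 = I^+ - A_2$ in the penultimate step.

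There is no real obstacle here; the only thing to be careful about is the bookkeeping of the idempotent decomposition of $I - A$, namely that $I - A = e_1(I^- - A_1) + e_2(I^+ - A_2)$, which uses $e_1 + e_2 = 1$ so that the bicomplex identity $I$ decomposes as $e_1 I^- + e_2 I^+$ with $I^-, I^+$ the complex identity matrices. The statement as written uses the notation $A = e_1 A_1 + e_2 A_2$, so I would either align the notation with $A^-, A^+$ used elsewhere or simply note the harmless change of letters at the start of the proof. Everything else is the same orthogonality/idempotency arithmetic ($e_1^2 = e_1$, $e_2^2 = e_2$, $e_1 e_2 = 0$) already exploited repeatedly in Theorems \ref{t4} and \ref{t5}.
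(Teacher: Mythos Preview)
Your proof is correct, but it takes a more elaborate route than the paper's. The paper never decomposes $I-A$ into its idempotent components and never invokes Theorems \ref{t2} or \ref{t4}; it simply uses $A^2=A$ at the bicomplex-matrix level to compute
\[
\big(e_1(I-A)\big)^2 = e_1^2(I-A)^2 = e_1(I^2 + A^2 - 2AI) = e_1(I + A - 2A) = e_1(I-A),
\]
using only $e_1^2=e_1$ and the fact that $I$ commutes with $A$. Your approach---passing to the components $A_1,A_2$, checking that $I^- - A_1$ and $I^+ - A_2$ are idempotent, reassembling via Theorem \ref{t2}, and then citing Theorem \ref{t4}---is logically sound and has the merit of making explicit how Theorem \ref{t6} sits inside the earlier framework, but it is considerably longer than necessary. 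Even your ``alternative'' direct computation still decomposes $(I-A)^2$ componentwise, which is unneeded: the identity $(I-A)^2 = I-A$ already holds in any ring once $A^2=A$.
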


\begin{proof}
    Suppose $A= e_1 A_1 +e_2 A_2$ is a $\C_2$-idempotent matrix. Use Definition \ref{d2}, we have
\begin{eqnarray*} 
 A^2=A \\
\mbox{Now}, \quad   [e_1(I-A)]^2 &=& e_1^2 (I-A)^2\\
            &=& e_1^2 ( I^2+A^2 -2AI)\\
                &=&e_1(I+A-2A)  \quad \{ \because \quad e_1^2 = e_1\}\\
                &=& e_1(I-A).
\end{eqnarray*}
Hence $e_1(I-A)$ is a $\C_2$-idempotent matrix, as required. Thus the proof of the theorem is complete.
\end{proof}
The following Corollary \ref{c2} is immediate consequence of Theorem \ref{t6}.

\begin{corollary}\label{c2}
Let $A= e_1 A^- +e_2 A^+$ be a $\C_2$-idempotent matrix. Then $e_2(I-A)$ is also a $\C_2$-idempotent matrix, where $I = e_1 I^-+ e_2 I^+$ is identity matrix of order $n \times n$.       
\end{corollary}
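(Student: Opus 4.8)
The plan is to mirror the proof of Theorem \ref{t6} almost verbatim, replacing the idempotent $e_1$ by $e_2$ and using the single property $e_2^2 = e_2$. Since $A = e_1 A^- + e_2 A^+$ is assumed $\C_2$-idempotent, Definition \ref{d2} gives $A^2 = A$, and this is the only fact about $A$ we shall need. The claim to establish is that $B \coloneqq e_2(I - A)$ satisfies $B^2 = B$.

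First I would expand $B^2 = [e_2(I-A)]^2 = e_2^2 (I-A)^2$, pulling the scalar $e_2$ out of the square (this is legitimate because $e_2$ is a bicomplex scalar commuting with matrices, exactly as in Theorem \ref{t6}). Then $e_2^2 = e_2$ collapses the leading factor, leaving $e_2 (I-A)^2$. Next I would expand $(I-A)^2 = I^2 - 2AI + A^2 = I - 2A + A^2$, and here the idempotency $A^2 = A$ kicks in to give $(I-A)^2 = I - 2A + A = I - A$. Combining, $B^2 = e_2(I-A) = B$, which by Definition \ref{d2} is exactly the assertion that $e_2(I-A)$ is a $\C_2$-idempotent matrix.

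Alternatively — and this is perhaps cleaner as a one-line deduction — one can simply invoke Theorem \ref{t6} together with Corollary \ref{c2}'s sibling relationship: Theorem \ref{t6} shows $e_1(I-A)$ is $\C_2$-idempotent, and by the same symmetry between $e_1$ and $e_2$ (the two nontrivial idempotents play interchangeable roles, each satisfying $e_j^2 = e_j$ and $e_1 e_2 = 0$), the identical computation with $e_1$ replaced by $e_2$ yields the result. I would present the direct computation rather than appeal to symmetry, since it is short and self-contained.

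There is essentially no obstacle here: the proof is a three-line algebraic manipulation resting on $e_2^2 = e_2$ and $A^2 = A$, with no need to decompose into idempotent components $A^-, A^+$ or to invoke Theorem \ref{t2}. The only point requiring a word of care is the step $[e_2(I-A)]^2 = e_2^2(I-A)^2$, which uses that scalar multiplication by $e_2 \in \C_2$ commutes past matrix multiplication — a fact already implicit in the scalar-product decomposition recorded in Remark \ref{equality matrix}. Everything else is the binomial expansion of $(I-A)^2$ for commuting $I$ and $A$.
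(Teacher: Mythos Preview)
Your proposal is correct and matches the paper's approach exactly: the paper states Corollary \ref{c2} without proof, declaring it an immediate consequence of Theorem \ref{t6}, and your computation is precisely the proof of Theorem \ref{t6} with $e_1$ swapped for $e_2$ and $e_2^2=e_2$ in place of $e_1^2=e_1$. There is nothing to add.
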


\section*{Conclusion}
In this paper, we explored the concepts of idempotent and nilpotent operators within the framework of bicomplex spaces. We also analyzed their fundamental properties and results. Additionally, we introduced the notion of idempotent matrices in bicomplex spaces and derived several important results related to their structure and properties.

The theorems establish a foundation for understanding bicomplex idempotent and nilpotent operators, highlighting their algebraic and analytical properties. These results extend matrix and operator theory to the bicomplex setting and provide a basis for further spectral theory and functional analysis research. The findings also pave the way for further research in spectral theory, functional analysis, and applications involving bicomplex structures.

\bibliographystyle{plain}
\bibliography{references}
\end{document}